\documentclass[12pt,a4paper]{amsart}
\usepackage{amsmath}
\usepackage{amsfonts}
\usepackage{amssymb,latexsym}
\usepackage{enumerate}
\usepackage{hyperref}
\usepackage{hyphenat}
\usepackage{color}
\usepackage{hhline}

\makeatletter
\@namedef{subjclassname@2010}{  \textup{2010} Mathematics Subject Classification}
\makeatother
\newtheorem{theorem}{Theorem}[section]
\newtheorem{corollary}[theorem]{Corollary}
\newtheorem{lemma}[theorem]{Lemma}

\numberwithin{equation}{section}
\frenchspacing
\textwidth=15.8cm
\textheight=24cm
\parindent=16pt
\oddsidemargin=0.1cm
\evensidemargin=0.1cm
\topmargin=-0.5cm

\begin{document}
\title[Symmetric brackets of connections with totally skew-symmetric torsion]%
{Symmetric brackets induced by\\
connections with totally skew-symmetric torsion\\
on skew-symmetric algebroids}
\author{Bogdan Balcerzak}

\begin{abstract}
In this note, we discuss symmetric brackets on skew-symmetric algebroids
associated with a metric structure. Given a pseudo-Riemannian metric
structure, we describe symmetric brackets induced by connections with
totally skew-symmetric torsion in the language of Lie derivatives and
differentials of functions. In particular, we obtain an explicit formula of
the Levi-Civita connection. We also present some symmetric brackets on
almost Hermitian manifolds. Especially, we discuss the first canonical
Hermitian connection and show the formula for it in the case of nearly K\"{a}%
hler manifolds using the properties of symmetric brackets.
\end{abstract}

\subjclass[2010]{ 58H05, 17B66, 53C05, 53C15, 58A10}
\keywords{Skew-symmetric algebroid; almost Lie algebroid; anchored vector
bundle; connection; symmetric product; the symmetrized covariant derivative;
symmetric Lie derivative; connection with totally skew-symmetric torsion\\
\ \\
(\textit{B. Balcerzak}) Institute of Mathematics, Lodz University of
Technology, W\'{o}lcza\'{n}ska 215, 90-924 \L \'{o}d\'{z}, Poland; e-mail:
bogdan.balcerzak@p.lodz.pl}
\maketitle

\section{Introduction}

An \textit{anchored vector bundle} $\left( A,\varrho _{A}\right) $ over a
manifold $M$ is a vector bundle $A$ over $M$ equipped with a homomorphism of
vector bundles $\varrho _{A}:A\rightarrow TM$ over the identity, which is
called an \textit{anchor}. If, additionally, in the space $\Gamma (A)$ of
smooth sections of $A$ we have $\mathbb{R}$-bilinear skew-symmetric mapping $%
\left[ \cdot ,\cdot \right] :\Gamma (A)\times \Gamma (A)\rightarrow \Gamma
(A)$ associated with the anchor with the following derivation law%
\begin{equation}
\left[ X,f\cdot Y\right] =f\cdot \left[ X,Y\right] +(\varrho _{A}\circ
X)(f)\cdot Y  \label{Leibniz-rule}
\end{equation}%
for $X,Y\in \Gamma (A)$, $f\in C^{\infty }(M)$, we say that $\left( A,\rho
_{A},\left[ \cdot ,\cdot \right] \right) $ is a \textit{skew-symmetric
algebroid} over $M$.

If the anchor preserves $\left[ \cdot ,\cdot \right] $ and the Lie bracket $%
\left[ \cdot ,\cdot \right] _{TM}$ of vector fields on $M$, i.e., $\varrho
_{A}\circ \left[ X,Y\right] =\left[ \varrho _{A}\circ X,\varrho _{A}\circ Y%
\right] _{TM}$ for $X,Y\in \Gamma (A)$, a skew-symmetric algebroid is an 
\textit{almost Lie algebroid}. Any skew-symmetric algebroid in which $\left[
\cdot ,\cdot \right] $ satisfies the Jacobi identity is a Lie algebroid in
the sense of Pradines, who discovered them as infinitesimal parts of
differentiable groupoids \cite{Pradines1967} (for the general theory of Lie
algebroids, we refer to the Mackenzie monographs \cite{Mackenzie1987}, \cite%
{Mackenzie2005}). Thus, Lie algebroids are simultaneous generalizations of
integrable distributions on the one hand, and Lie algebras on the other.

Anchored vector bundles, in particular almost Lie algebroids, were
intensively studied by Marcela Popescu and Paul Popescu, among others, in 
\cite{Popescu1996}, \cite{Popescu-Popescu2001BCP}, \cite%
{Popescu-Popescu2001BCP-2}, \cite{Popescu2004} and recently in \cite%
{Popescu2019}, in which the Chern character for almost Lie algebroids is
considered. However, the concept of skew-symmetric algebroids was introduced
by Kosmann-Schwarzbach and Magri in \cite{Kosmann-Schwarzbach-Magri1990} on
the level of finitely generated projective modules over commutative and
associative algebras with unit and under the name \textit{pre-Lie algebroids}%
. Skew-symmetric algebroids (under the same name pre-Lie algebroids) were
examined by Grabowski and Urba\'{n}ski in \cite{Grabowski-Urbanski1997}, 
\cite{Grabowski-Urbanski1999}, where a concept of general algebroids, which
play an important role in analytical mechanics, also was introduced. Using
general algebroids instead of Lie algebroids, Grabowska, Grabowski, and Urba%
\'{n}ski observed in \cite{Grabowska-Grabowski2008} and \cite%
{Grabowska-Grabowski-Urbanski2006} that one can describe a larger family of
systems, both in the Lagrangian and Hamiltonian formalisms.

In this paper, we use the terminology of skew-symmetric algebroid which
comes from de Le\'{o}n, Marrero, and de Diego in \cite{L-M-N2010}, in which
linear almost Poisson structures (also discussed \cite%
{Grabowski-Urbanski1997}, \cite{Grabowski-Urbanski1999}, \cite%
{Kosmann-Schwarzbach-Magri1990}) are applied to nonholonomic mechanical
systems.

Given an anchored vector bundle, we can associate a connection. Given a
skew-symmetric algebroid, we can associate a connection with a torsion.

An $A$\textit{-connection }in a vector bundle $E\rightarrow M$ is an $%
\mathbb{R}$-bilinear map $\nabla :\Gamma (A)\times \Gamma (E)\rightarrow
\Gamma (E)$ with the following properties: 
\begin{eqnarray*}
\nabla _{f\cdot X}(u) &=&f\cdot \nabla _{X}(u), \\
\nabla _{X}(f\cdot u) &=&f\cdot \nabla _{X}(u)+(\varrho _{A}\circ X)(f)\cdot
u
\end{eqnarray*}%
for any $X,Y\in \Gamma (A),$ $f\in C^{\infty }(M)$,$\ u\in \Gamma (E)$.

The \emph{torsion} of an $A$-connection $\nabla $ in $A$ is the tensor $%
T^{\nabla }\in \Gamma (\mathop{\textstyle \bigwedge }\nolimits^{2}A^{\ast
}\otimes A)$ defined by%
\begin{equation*}
T^{\nabla }(X,Y)=\nabla _{X}Y-\nabla _{Y}X-\left[ X,Y\right]
\end{equation*}%
for $X,Y\in \Gamma (A)$. We say that an $A$-connection is \emph{torsion-free}
if its torsion equals zero.

Let $\mathsf{S}^{k}A^{\ast }$ denote the $k$-th symmetric power of the
bundle $A^{\ast }$. On $\mathsf{S}(A)=\mathop{\textstyle \bigoplus }\limits%
_{k\geq 0}\mathsf{S}^{k}A^{\ast }$, we define the operator $d^{s}:\mathsf{S}%
(A)\rightarrow \mathsf{S}(A)$ as the symmetrized covariant derivative, i.e., 
$d^{s}\eta =\left( k+1\right) \cdot \left( \mathop{\rm Sym}\circ \nabla
\right) \eta \ $for $\eta \in \Gamma (\mathsf{S}^{k}A^{\ast })$. This
operator appeared naturally in the study of generalized gradients on Lie
algebroids in the sense of Stein-Weiss in \cite{Balcerzak-Pierzchalski2013}.
However, the operator $d^{s}$ in the case of tangent bundles was introduced
by Sampson in \cite{Sampson1973}. Next, this operator on tangent bundles was
discussed by several authors when studying the Lichnerowicz-type Laplacian
on symmetric tensors, called the Sampson Laplacian. In particular, we
observe an important contribution to the theory of these operators in the
recent papers of Mike\v{s}, Rovenski, Stepanov, and Tsyganok \cite%
{Stepanov-Tsyganok-Mikes2019}, \cite{Mikes-Rovenski-Stepanov2020}. We also
refer to \cite{Mikes-Rovenski-Stepanov2020} for the extended literature on
the Sampson Laplacian.

For $\eta \in \Gamma (\mathsf{S}^{k}A^{\ast })$, $X_{1},\ldots ,X_{k+1}\in
\Gamma \left( A\right) $ holds%
\begin{eqnarray}
\left( d^{s}\eta \right) \left( X_{1},\ldots ,X_{k+1}\right)  &=&%
\mathop{\textstyle \sum }\limits_{j=1}^{k+1}(\rho _{A}\circ X_{j})\hspace{%
-0.1cm}\left( \eta (X_{1},\ldots \widehat{X}_{j}\ldots ,X_{k+1})\right) 
\label{KoszulForm} \\
&&-\mathop{\textstyle \sum }\limits_{i<j}\eta \hspace{-0.05cm}\left(
\left\langle X_{i}:X_{j}\right\rangle ,X_{1},\ldots \widehat{X}_{i}\ldots 
\widehat{X}_{j}\ldots ,X_{k+1}\right) ,  \notag
\end{eqnarray}%
where%
\begin{equation*}
\left\langle X:Y\right\rangle =\nabla _{X}Y+\nabla _{Y}X
\end{equation*}%
for $X,Y\in \Gamma \left( A\right) $. Thus, $d^{s}$ can be written in the
Koszul-type form (\ref{KoszulForm}), where $\left\langle \cdot :\cdot
\right\rangle $ is the symmetric bracket giving on $A$ a pseudo-Lie
algebroid structure in the sense of \cite{Grabowski-Urbanski1997} with $\rho
_{A}$ and $-\rho _{A}$ as the left anchor and the right anchor,
respectively. This Koszul-type shape of $d^{s}$ in the case of tangent
bundles was first obtained by Heydari, Boroojerdian, and Peyghan in \cite%
{H-B-P2006}.

Therefore, given an $A$-connection $\nabla $ in $A$, we can associate a
symmetric $\mathbb{R}$-bilinear mapping $\left\langle \cdot :\cdot
\right\rangle :\Gamma (A)\times \Gamma (A)\rightarrow \Gamma (A)$, defined
for $X,Y\in \Gamma (A)$, by%
\begin{equation*}
\left\langle X:Y\right\rangle =\nabla _{X}Y+\nabla _{Y}X.
\end{equation*}%
We say that this mapping is a \emph{symmetric product} induced by $\nabla $.
The symmetric product in the case of tangent bundles was introduced by
Crouch \cite{Crouch1981}.

Observe that the symmetric product $\left\langle \cdot :\cdot \right\rangle $
satisfies the Leibniz rules and%
\begin{equation*}
\nabla _{X}Y={\textstyle{\frac{1}{2}}}\left( \left[ X,Y\right] +\left\langle
X:Y\right\rangle \right) +{\textstyle{\frac{1}{2}}}T^{\nabla }(X,Y)
\end{equation*}%
for $X,Y\in \Gamma \left( A\right) $. Thus, the symmetric product induced by 
$\nabla $ is a summand of the connection. Our first purpose is to determine
the symmetric products for connections related to the pseudometric
structure. We give an explicit formula for a metric connection with totally
skew-symmetric torsion (but not necessary torsion-free in general) using the
language of symmetric product. To describe these symmetric brackets, we use
the Lie derivative and the exterior derivative operator induced by the
structure of the skew-symmetric algebroid and their symmetric counterparts.
We show that the condition for connections with totally skew-symmetric
torsion to be compatible with the metric is that the (alternating) Lie
derivative of the metric is equal to the minus of the symmetric Lie
derivative of the metric.

We also consider an almost Hermitian structure and some symmetric brackets
associated with connections that are compatible with the metric structure
and the almost complex structure. We consider two structures of the
skew-symmetric algebroid in the almost Hermitian manifold $(M,g,J)$. The
first structure is the tangent bundle with the identity as an anchor and
with the Lie bracket of vector fields. The second skew-symmetric algebroid
structure induced by the almost complex structure $J$, where $J$ is the
anchor and the bracket is associated with the Nijenhuis tensor, was
introduced in \cite{Kosmann-Schwarzbach-Magri1990}. We also discuss the
first canonical Hermitian connection $\overline{\nabla }$ and obtain a
formula for $\overline{\nabla }$ in the case of nearly K\"{a}hler manifolds
using the properties of symmetric brackets.

\section{The exterior derivative operator and the symmetrized covariant
derivative}

Let $\left( A,\varrho _{A},[\cdot ,\cdot ]\right) $ be a skew-symmetric
algebroid over a manifold $M$. The \emph{substitution} operator $%
i_{X}:\Gamma (\mathop{\textstyle \bigotimes }\nolimits^{k}A^{\ast
})\rightarrow \Gamma (\mathop{\textstyle \bigotimes }\nolimits^{k-1}A^{\ast
})$ for $X\in \Gamma (A)$ is defined by%
\begin{equation*}
(i_{X}\zeta )(X_{1},\ldots ,X_{k-1})=\zeta (X,X_{1},\ldots ,X_{k-1})
\end{equation*}%
for $\zeta \in \Gamma (\mathop{\textstyle \bigotimes }\nolimits^{k}A^{\ast })
$, $X,X_{1},\ldots ,X_{k-1}\in \Gamma (A)$.

The (\emph{alternating})\emph{\ Lie derivative} $\mathcal{L}_{X}^{a}:\Gamma (%
\mathop{\textstyle \bigotimes }\nolimits^{k}A^{\ast })\rightarrow \Gamma (%
\mathop{\textstyle \bigotimes }\nolimits^{k}A^{\ast })$ for $X\in \Gamma (A)$
is defined by%
\begin{equation*}
\left( \mathcal{L}_{X}^{a}\Omega \right) (X_{1},\ldots ,X_{k})=(\varrho
_{A}\circ X)(\Omega (X_{1},\ldots ,X_{k}))-\mathop{\textstyle \sum }\limits%
_{i=1}^{k}\Omega (X_{1},\ldots ,\left[ X,X_{i}\right] ,\ldots ,X_{k})
\end{equation*}%
for $\Omega \in \Gamma (\mathop{\textstyle \bigotimes }\nolimits^{k}A^{\ast
})$, $X_{1},\ldots ,X_{k}\in \Gamma (A)$. Notice that $\mathcal{L}%
_{X}^{a}(\eta )\in \Gamma (\mathop{\textstyle \bigwedge }A^{\ast })$ if $%
\eta \in \Gamma (\mathop{\textstyle \bigwedge }A^{\ast })$.

Moreover, let%
\begin{equation*}
\nabla :\Gamma (A)\times \Gamma (A)\rightarrow \Gamma (A)
\end{equation*}%
be an $A$-connection in $A$. We define the $A$-connection $\nabla ^{\ast }$
in the dual bundle in a classical way by the following formula 
\begin{equation*}
(\nabla _{X}^{\ast }\omega )Y=(\varrho _{A}\circ X)(\omega (Y))-\omega
(\nabla _{X}Y)
\end{equation*}%
for $\omega \in \Gamma (A^{\ast })$, $X,Y\in \Gamma (A)$. Next, by the
Leibniz rule, we extend this connection to the $A$-connection in the whole
tensor bundle $\mathop{\displaystyle \bigotimes }A^{\ast }$, which will also
be denoted by $\nabla $. Then for $\zeta \in \Gamma \left( %
\mathop{\displaystyle \bigotimes }^{k}A^{\ast }\right) $, $X,X_{1},\ldots
,X_{k}\in \Gamma (A)$,%
\begin{equation*}
(\nabla _{X}\zeta )(X_{1},\ldots ,X_{k})=(\varrho _{A}\circ X)(\zeta
(X_{1},\ldots ,X_{k}))-\mathop{\displaystyle \sum }\limits_{j=1}^{k}\zeta
(X_{1},\ldots ,\nabla _{X}X_{j},\ldots ,X_{k}).
\end{equation*}%
Now, we define the operator 
\begin{equation*}
\nabla :\Gamma \left( \bigotimes\nolimits^{k}A^{\ast }\right) \rightarrow
\Gamma \left( \bigotimes\nolimits^{k+1}A^{\ast }\right) 
\end{equation*}%
by%
\begin{equation*}
\left( \nabla \zeta \right) \left( X_{1},X_{2}\ldots ,X_{k+1}\right) =\left(
\nabla _{X_{1}}\zeta \right) \left( X_{2},\ldots ,X_{k+1}\right) .
\end{equation*}

We recall that the exterior derivative operator on the skew-symmetric
algebroid $\left( A,\varrho _{A},[\cdot ,\cdot ]\right) $ is defined by%
\begin{eqnarray*}
\left( d^{a}\eta \right) \left( X_{1},\ldots ,X_{k+1}\right)  &=&%
\mathop{\textstyle \sum }\limits_{j=1}^{k+1}\left( -1\right) ^{j+1}(\rho
_{A}\circ X_{j})\hspace{-0.1cm}\left( \eta (X_{1},\ldots \widehat{X}%
_{j}\ldots ,X_{k+1})\right)  \\
&&+\mathop{\textstyle \sum }\limits_{i<j}\left( -1\right) ^{i+j}\eta \hspace{%
-0.1cm}\left( \left[ X_{i},X_{j}\right] ,X_{1},\ldots \widehat{X}_{i}\ldots 
\widehat{X}_{j}\ldots ,X_{k+1}\right) 
\end{eqnarray*}%
for $\eta \in \Gamma (\mathop{\textstyle \bigwedge }\nolimits^{k}A^{\ast })$%
, $X_{1},\ldots ,X_{k+1}\in \Gamma (A)$. Associated with a skew-symmetric
algebroid $\left( A,\varrho _{A},[\cdot ,\cdot ]\right) $ is the \emph{%
Jacobiator} $\mathop{\rm Jac}\nolimits_{[\cdot ,\cdot ]}:\Gamma (A)\times
\Gamma (A)\rightarrow \Gamma (A)$ of the bracket $[\cdot ,\cdot ]$ given by 
\begin{equation*}
\mathop{\rm Jac}\nolimits_{[\cdot ,\cdot
]}(X,Y,Z)=[[X,Y],Z]+[[Z,X],Y]+[[Y,Z],X]
\end{equation*}%
for $X,Y,Z\in \Gamma (A)$. If the bracket $\left[ \cdot ,\cdot \right] $
satisfies the Jacobi identity, i.e., $\mathop{\rm Jac}\nolimits_{[\cdot
,\cdot ]}=0$, $d^{a}\circ d^{a}=0$ (discussed in \cite{Maxim-Raileanu1976}).
If $\nabla $ is torsion-free $A$-connection in $A$, then $d^{a}$ can be
written as the alternation of the operator $\nabla $ (cf. \cite%
{Balcerzak-Pierzchalski2013}), i.e., $d^{a}=\left( k+1\right) \cdot \left( %
\mathop{\rm Alt}\circ \nabla \right) $\ on\ $\Gamma (%
\mathop{\textstyle
\bigwedge }\nolimits^{k}A^{\ast })$, where $\mathop{\rm Alt}$ is the \emph{%
alternator} given by $(\mathop{\rm Alt}\zeta )\left( X_{1},\ldots
,X_{k}\right) =\frac{1}{k!}\mathop{\textstyle \sum }\limits_{\sigma \in
S_{k}}\mathop{\rm sgn}\nolimits\sigma ~\zeta \hspace{-0.1cm}\left( X_{\sigma
\left( 1\right) },\ldots ,X_{\sigma \left( k\right) }\right) $ \ for\ \ $%
\zeta \in \Gamma (\mathop{\textstyle \bigotimes }\nolimits^{k}A^{\ast })$.
Equivalently,%
\begin{equation*}
\left( d^{a}\eta \right) \left( X_{1},\ldots ,X_{k+1}\right) =%
\mathop{\textstyle \sum }\limits_{j=1}^{k+1}\left( -1\right) ^{j+1}\left(
\nabla _{X_{j}}\eta \right) \hspace{-0.1cm}\left( X_{1},\ldots \widehat{X}%
_{j}\ldots ,X_{k+1}\right) 
\end{equation*}%
for $\eta \in \Gamma (\mathop{\textstyle \bigwedge }\nolimits^{k}A^{\ast })$%
, $X_{1},\ldots ,X_{k+1}\in \Gamma (A)$.

Here, we recall the classical Cartan's formulas:

\begin{lemma}
\label{Alternating_Cartan}For any $X,Y\in \Gamma (A)$,

\begin{itemize}
\item[(a)] $\mathcal{L}_{X}^{a}=i_{X}d^{a}+d^{a}i_{X}$ and

\item[(b)] $\mathcal{L}_{X}^{a}i_{Y}-i_{Y}\mathcal{L}_{X}^{a}=i_{\left[ X,Y%
\right] }$.
\end{itemize}
\end{lemma}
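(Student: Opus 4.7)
The plan is to verify both identities by direct pointwise computation on sections, using the explicit formula for $d^{a}$ displayed just above the lemma. No Jacobi identity is needed, so the skew-symmetric algebroid structure (without any integrability) suffices.

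For part (b), I would take $\eta\in\Gamma(\bigwedge^{k}A^{\ast})$ and unfold $(\mathcal{L}_{X}^{a}i_{Y}\eta)(X_{1},\ldots,X_{k-1})$ and $(i_{Y}\mathcal{L}_{X}^{a}\eta)(X_{1},\ldots,X_{k-1})=(\mathcal{L}_{X}^{a}\eta)(Y,X_{1},\ldots,X_{k-1})$ from the definition of $\mathcal{L}^{a}$. The anchor–derivation terms $(\varrho_{A}\circ X)(\eta(Y,X_{1},\ldots))$ agree in both expressions and cancel. The sums $\sum_{i}\eta(Y,X_{1},\ldots,[X,X_{i}],\ldots,X_{k-1})$ also match and cancel. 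The only asymmetry is that $i_{Y}\mathcal{L}_{X}^{a}\eta$ contains the extra summand $-\eta([X,Y],X_{1},\ldots,X_{k-1})$ coming from the $i=0$ term when $Y$ sits in the first slot. Subtracting gives $\eta([X,Y],X_{1},\ldots,X_{k-1})=(i_{[X,Y]}\eta)(X_{1},\ldots,X_{k-1})$, as required.

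For part (a), I would compute $(i_{X}d^{a}\eta)(X_{1},\ldots,X_{k})=(d^{a}\eta)(X,X_{1},\ldots,X_{k})$ by reading off the two sums in the boxed formula for $d^{a}$, and separately compute $(d^{a}i_{X}\eta)(X_{1},\ldots,X_{k})$. In the first expression I would split the anchor sum into the $j=1$ contribution, which produces the desired term $(\varrho_{A}\circ X)(\eta(X_{1},\ldots,X_{k}))$, and the rest, which gives $\sum_{m=1}^{k}(-1)^{m}(\varrho_{A}\circ X_{m})(\eta(X,X_{1},\ldots,\widehat{X}_{m},\ldots,X_{k}))$; the bracket sum splits analogously into terms with first index $1$ (producing $\sum_{m}(-1)^{m}\eta([X,X_{m}],X_{1},\ldots,\widehat{X}_{m},\ldots,X_{k})$) and terms with $2\le i<j$ (producing $\eta([X_{p},X_{q}],X,\ldots)$ contributions). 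In the second expression, applying $d^{a}$ to $i_{X}\eta$ gives exactly the same anchor terms with opposite sign, and bracket terms $\eta(X,[X_{i},X_{j}],\ldots)$ which, after moving $X$ past one slot using skew-symmetry, cancel the corresponding $\eta([X_{p},X_{q}],X,\ldots)$ terms in the first expression.

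After cancellation, what survives is $(\varrho_{A}\circ X)(\eta(X_{1},\ldots,X_{k}))+\sum_{m=1}^{k}(-1)^{m}\eta([X,X_{m}],X_{1},\ldots,\widehat{X}_{m},\ldots,X_{k})$, and a further sign swap (bringing $[X,X_{m}]$ back into the $m$-th position, which contributes $(-1)^{m-1}$) converts this into the defining formula of $(\mathcal{L}_{X}^{a}\eta)(X_{1},\ldots,X_{k})$. The only real obstacle is pure sign bookkeeping — in particular carefully tracking the sign when moving $X$ between positions in the alternating tensor and when relabeling indices after dropping the first slot; once that is done, everything collapses by pairwise cancellation and no further identities are invoked.
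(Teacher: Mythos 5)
Your proposal is correct: the sign bookkeeping in (a) works out exactly as you describe (the $j=1$ anchor term survives, the remaining anchor terms cancel against those of $d^{a}i_{X}\eta$, the $\eta([X_{p},X_{q}],X,\ldots)$ terms cancel by skew-symmetry of $\eta$, and the surviving bracket terms acquire the factor $(-1)^{m}(-1)^{m-1}=-1$ needed to match $\mathcal{L}_{X}^{a}$), and (b) is the one-line comparison you give. The paper itself states this lemma without proof, merely recalling it as the classical Cartan formulas, but your direct pointwise computation is precisely the argument the author carries out for the symmetric analogue in Lemma \ref{Symmetric_Cartan}, so your approach is essentially the paper's own.
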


The symmetrized covariant derivative is the operator%
\begin{equation*}
d^{s}=\left( k+1\right) \cdot \left( \mathop{\rm Sym}\circ \nabla \right)
:\Gamma (\mathsf{S}^{k}A^{\ast })\rightarrow \Gamma (\mathsf{S}^{k+1}A^{\ast
})
\end{equation*}%
being the symmetrization of $\nabla $ up to a constant on the symmetric
power bundle, where $\mathop{\rm Sym}$ is the \emph{symmetrizer} defined by $%
\left( \mathop{\rm Sym}\zeta \right) \left( X_{1},\ldots ,X_{k}\right) =%
\frac{1}{k!}\mathop{\textstyle \sum }\limits_{\sigma \in S_{k}}\zeta \hspace{%
-0.1cm}\left( X_{\sigma \left( 1\right) },\ldots ,X_{\sigma \left( k\right)
}\right) $\ \ for\ \ $\zeta \in \Gamma (\mathop{\textstyle \bigotimes }%
\nolimits^{k}A^{\ast })$. Equivalently,%
\begin{equation}
\left( d^{s}\eta \right) \left( X_{1},\ldots ,X_{k+1}\right) =%
\mathop{\textstyle \sum }\limits_{j=1}^{k+1}\left( \nabla _{X_{j}}\eta
\right) \hspace{-0.1cm}\left( X_{1},\ldots \widehat{X}_{j}\ldots
,X_{k+1}\right) 
\end{equation}%
for $\eta \in \Gamma (\mathsf{S}^{k}A^{\ast })$, $X_{1},\ldots ,X_{k+1}\in
\Gamma \left( A\right) $. We recall that the operator $d^{s}$ in the case of
tangent bundles was introduced by Sampson in \cite{Sampson1973}, in which a
symmetric version of Chern's theorem is proved. This operator on tangent
bundles was discussed in \cite{H-B-P2006}, in which a Fr\"{o}%
licher--Nijenhuis bracket for vector valued symmetric tensors is also
discussed and in \cite{Balcerzak2019}, in which the Dirac-type operator on
symmetric tensors was considered. One can check that for $\eta \in \Gamma (%
\mathsf{S}^{k}A^{\ast })$, $X_{1},\ldots ,X_{k+1}\in \Gamma \left( A\right) $
the following Koszul-type formula holds%
\begin{eqnarray*}
\left( d^{s}\eta \right) \left( X_{1},\ldots ,X_{k+1}\right)  &=&%
\mathop{\textstyle \sum }\limits_{j=1}^{k+1}(\rho _{A}\circ X_{j})\hspace{%
-0.1cm}\left( \eta (X_{1},\ldots \widehat{X}_{j}\ldots ,X_{k+1})\right)  \\
&&-\mathop{\textstyle \sum }\limits_{i<j}\eta \hspace{-0.05cm}\left(
\left\langle X_{i}:X_{j}\right\rangle ,X_{1},\ldots \widehat{X}_{i}\ldots 
\widehat{X}_{j}\ldots ,X_{k+1}\right) ,
\end{eqnarray*}%
where%
\begin{equation*}
\left\langle X:Y\right\rangle =\nabla _{X}Y+\nabla _{Y}X
\end{equation*}%
for $X,Y\in \Gamma \left( A\right) $. This shape of $d^{s}$ in the case $A=TM
$ was discovered by Heydari, Boroojerdian, and Peyghan in \cite{H-B-P2006}.
The symmetric $\mathbb{R}$-bilinear form%
\begin{equation*}
\left\langle \cdot :\cdot \right\rangle :\Gamma \left( A\right) \times
\Gamma \left( A\right) \longrightarrow \Gamma \left( A\right) ,\ \ \
\left\langle X:Y\right\rangle =\nabla _{X}Y+\nabla _{Y}X
\end{equation*}%
is called the \textbf{symmetric product} induced by the $A$-connection $%
\nabla $. The symmetric product in the case $A=TM$ was first introduced by
Crouch in \cite{Crouch1981}. However, the symmetric product for Lie
algebroids was first considered in the context of control systems by Cort%
\'{e}s and Mart\'{\i}nez in \cite{Cortes-Martinez2004}. Observe that 
\begin{equation*}
\left\langle X:f\cdot Y\right\rangle =f\cdot \left\langle X:Y\right\rangle
+\left( \varrho _{A}\circ X\right) (f)\cdot Y
\end{equation*}%
for all $X,Y\in \Gamma \left( A\right) $ and $f\in C^{\infty }(M)$.
Therefore, $\left\langle \cdot :\cdot \right\rangle $ satisfies the
Leibniz-kind rule. Lewis in \cite{Lewis1998} gives some interesting
geometrical interpretation of the symmetric product associated with the
geodesically invariant property of a distribution. We say that a smooth
distribution $D$ on a manifold $M$ with an affine connection $\nabla ^{TM}$
is \textit{geodesically invariant} if for every geodesic $c:I\rightarrow M$
satisfying the property $c^{\prime }(s)\in D_{c(s)}$ for some $s\in I$, we
have $c^{\prime }(s)\in D_{c(s)}$ for every $s\in I$. Lewis proved in \cite%
{Lewis1998} that a distribution $D$ on a manifold $M$ equipped with an
affine connection $\nabla ^{TM}$ is geodesically invariant if and only if
the symmetric product induced by $\nabla ^{TM}$ is closed under $D$.

\section{Symmetric bracket. Symmetric Lie derivative}

Let $\left( A,\varrho _{A},[\cdot ,\cdot ]\right) $ be a skew-symmetric
algebroid over a manifold $M$. A \emph{symmetric bracket} on the anchored
vector bundle $\left( A,\varrho _{A}\right) $ is an $\mathbb{R}$-bilinear
symmetric mapping 
\begin{equation*}
\left\langle \cdot :\cdot \right\rangle :\Gamma (A)\times \Gamma
(A)\rightarrow \Gamma (A)
\end{equation*}%
satisfying the following Leibniz-kind rule:%
\begin{equation*}
\left\langle X:fY\right\rangle =f\left\langle X:Y\right\rangle +(\varrho
_{A}\circ X)(f)Y
\end{equation*}%
for $X,Y\in \Gamma (A)$, $f\in C^{\infty }(M)$.

Let us assume that the skew-symmetric algebroid $\left( A,\varrho
_{A},[\cdot ,\cdot ]\right) $ is equipped with a symmetric bracket $%
\left\langle \cdot :\cdot \right\rangle :\Gamma (A)\times \Gamma
(A)\rightarrow \Gamma (A)$.

We define the operator $d^{s}:\Gamma (\mathsf{S}^{k}A^{\ast })\rightarrow
\Gamma (\mathsf{S}^{k+1}A^{\ast })$ on symmetric power bundle $\mathsf{S}(A)$
for $\eta \in \Gamma (\mathsf{S}^{k}A^{\ast })$, $X_{1},\ldots ,X_{k+1}\in
\Gamma \left( A\right) $ by%
\begin{eqnarray*}
\left( d^{s}\eta \right) \left( X_{1},\ldots ,X_{k+1}\right)  &=&%
\mathop{\textstyle \sum }\limits_{j=1}^{k+1}(\rho _{A}\circ X_{j})\hspace{%
-0.1cm}\left( \eta (X_{1},\ldots \widehat{X}_{j}\ldots ,X_{k+1})\right)  \\
&&-\mathop{\textstyle \sum }\limits_{i<j}\eta \hspace{-0.1cm}\left(
\left\langle X_{i}:X_{j}\right\rangle ,X_{1},\ldots \widehat{X}_{i}\ldots 
\widehat{X}_{j}\ldots ,X_{k+1}\right) .
\end{eqnarray*}%
Next, we extend this operator to the whole tensor bundle by the formula%
\begin{equation*}
d^{s}:\Gamma (\mathop{\textstyle \bigotimes }\nolimits^{k}A^{\ast
})\rightarrow \Gamma (\mathop{\textstyle \bigotimes }\nolimits^{k+1}A^{\ast
}),
\end{equation*}%
\begin{eqnarray*}
\left( d^{s}\Omega \right) \left( X_{1},\ldots ,X_{k+1}\right)  &=&%
\mathop{\textstyle \sum }\limits_{j=1}^{k+1}(\rho _{A}\circ X_{j})\hspace{%
-0.1cm}\left( \Omega (X_{1},\ldots \widehat{X}_{j}\ldots ,X_{k+1})\right)  \\
&&-\mathop{\textstyle \sum }\limits_{i<j}\Omega \hspace{-0.05cm}\left(
X_{1},\ldots \widehat{X}_{i}\ldots ,\left\langle X_{i}:X_{j}\right\rangle
,\ldots ,X_{k+1}\right) 
\end{eqnarray*}%
for $\Omega \in \Gamma (\mathop{\textstyle \bigotimes }\nolimits^{k}A^{\ast
})$, $X_{1},\ldots ,X_{k+1}\in \Gamma (A)$.

The \emph{symmetric Lie derivative} $\mathcal{L}_{X}^{s}:\Gamma (%
\mathop{\textstyle \bigotimes }\nolimits^{k}A^{\ast })\rightarrow \Gamma (%
\mathop{\textstyle \bigotimes }\nolimits^{k}A^{\ast })$ for $X\in \Gamma (A)$
is defined by%
\begin{equation*}
\left( \mathcal{L}_{X}^{s}\Omega \right) (X_{1},\ldots ,X_{k})=(\varrho
_{A}\circ X)(\Omega (X_{1},\ldots ,X_{k}))-\mathop{\textstyle \sum }\limits%
_{i=1}^{k}\Omega (X_{1},\ldots ,\left\langle X:X_{i}\right\rangle ,\ldots
,X_{k})
\end{equation*}%
for $\Omega \in \Gamma (\mathop{\textstyle \bigotimes }\nolimits^{k}A^{\ast
})$, $X_{1},\ldots ,X_{k}\in \Gamma (A)$. Notice that the image $\mathcal{L}%
_{X}^{s}(\varphi )$ of a symmetric tensor $\varphi $ is also a symmetric
tensor.

Next, observe that the symmetric Lie derivative satisfies the following
Cartan's identities.

\begin{lemma}
\label{Symmetric_Cartan}For any $X,Y\in \Gamma (A)$,

\begin{itemize}
\item[(a)] $\mathcal{L}_{X}^{s}=i_{X}d^{s}-d^{s}i_{X}$ and

\item[(b)] $\mathcal{L}_{X}^{s}i_{Y}-i_{Y}\mathcal{L}_{X}^{s}=i_{\left%
\langle X:Y\right\rangle }$.
\end{itemize}
\end{lemma}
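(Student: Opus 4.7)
My plan is to prove both identities by direct computation on arbitrary test tensors and arguments, matching terms carefully. Unlike $d^{a}$, which is a graded derivation, neither $d^{s}$ nor $\mathcal{L}_{X}^{s}$ satisfies a clean Leibniz rule with respect to the symmetric tensor product, so I do not expect a reduction to a generating set of $0$- and $1$-tensors; the content of both identities is purely combinatorial. The only inputs I will use are the index formulas defining $d^{s}$ and $\mathcal{L}_{X}^{s}$ on $\Gamma(\mathop{\textstyle \bigotimes}\nolimits^{k}A^{\ast})$ stated earlier in this section, together with $(i_{Z}\Omega)(Y_{1},\ldots)=\Omega(Z,Y_{1},\ldots)$.

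For part (a), I fix $\Omega \in \Gamma(\mathop{\textstyle \bigotimes}\nolimits^{k}A^{\ast})$ and $X_{1},\ldots,X_{k}\in \Gamma(A)$, and expand $(i_{X}d^{s}\Omega)(X_{1},\ldots,X_{k})=(d^{s}\Omega)(X,X_{1},\ldots,X_{k})$ by placing $X$ in the first slot of the defining formula for $d^{s}$. The anchor sum then splits into the $j=1$ term $(\varrho_{A}\circ X)(\Omega(X_{1},\ldots,X_{k}))$ and, after reindexing $l=j-1$, the block $\sum_{l=1}^{k}(\varrho_{A}\circ X_{l})(\Omega(X,X_{1},\ldots,\widehat{X_{l}},\ldots,X_{k}))$. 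The double sum over $i<j$ splits into the $i=1$ block $\sum_{l=1}^{k}\Omega(X_{1},\ldots,\langle X:X_{l}\rangle,\ldots,X_{k})$, with $\langle X:X_{l}\rangle$ at slot $l$, and the $2\le i<j$ block, which after shifting both indices becomes $\sum_{i<j}\Omega(X,X_{1},\ldots,\widehat{X_{i}},\ldots,\langle X_{i}:X_{j}\rangle,\ldots,X_{k})$. Expanding $(d^{s}i_{X}\Omega)(X_{1},\ldots,X_{k})$ directly from the definition reproduces precisely the reindexed anchor block and the reindexed $2\le i<j$ block. Subtracting leaves exactly $(\varrho_{A}\circ X)(\Omega(X_{1},\ldots,X_{k}))-\sum_{l}\Omega(X_{1},\ldots,\langle X:X_{l}\rangle,\ldots,X_{k})$, which by definition equals $(\mathcal{L}_{X}^{s}\Omega)(X_{1},\ldots,X_{k})$, proving (a).

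For part (b), I evaluate both sides on $X_{1},\ldots,X_{k-1}$. In $(\mathcal{L}_{X}^{s}i_{Y}\Omega)(X_{1},\ldots,X_{k-1})$ the symmetric Lie derivative only sees the $k-1$ arguments $X_{1},\ldots,X_{k-1}$, whereas in $(i_{Y}\mathcal{L}_{X}^{s}\Omega)(X_{1},\ldots,X_{k-1})=(\mathcal{L}_{X}^{s}\Omega)(Y,X_{1},\ldots,X_{k-1})$ the sum additionally contains the $Y$-slot contribution $-\Omega(\langle X:Y\rangle,X_{1},\ldots,X_{k-1})$; the two anchor terms $(\varrho_{A}\circ X)(\Omega(Y,X_{1},\ldots,X_{k-1}))$ and the $X_{l}$-slot contributions $-\Omega(Y,X_{1},\ldots,\langle X:X_{l}\rangle,\ldots,X_{k-1})$ agree on both sides and cancel. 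The surviving difference is precisely $\Omega(\langle X:Y\rangle,X_{1},\ldots,X_{k-1})=(i_{\langle X:Y\rangle}\Omega)(X_{1},\ldots,X_{k-1})$, as claimed. The only real obstacle throughout is index bookkeeping, in particular the sign in (a): in contrast with Lemma~\ref{Alternating_Cartan}(a) the sign here is a minus, which reflects that in the symmetric case moving $X$ through the other slots produces no skew-signs, so the "doubled" anchor terms in $i_{X}d^{s}\Omega$ cancel against those of $d^{s}i_{X}\Omega$ rather than reinforce them as in the alternating Cartan formula.
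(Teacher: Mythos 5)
Your proof is correct and follows essentially the same route as the paper: both expand $i_{X}d^{s}\Omega$ and $d^{s}i_{X}\Omega$ (respectively $\mathcal{L}_{X}^{s}i_{Y}\Omega$ and $i_{Y}\mathcal{L}_{X}^{s}\Omega$) directly from the defining index formulas and cancel the common blocks. The bookkeeping of the $j=1$ and $i=1$ terms, and your observation about the minus sign relative to Lemma~\ref{Alternating_Cartan}(a), are accurate.
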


\begin{proof}
Let $X,Y,X_{1},\ldots ,X_{k}\in \Gamma (A)$ and $\Omega \in \Gamma (%
\mathop{\textstyle \bigotimes }\nolimits^{k}A^{\ast })$.\newline
Observe that%
\begin{eqnarray*}
&&\left( i_{X}d^{s}\Omega \right) (X_{1},\ldots ,X_{k}) \\
&=&(\varrho _{A}\circ X)(\Omega (X_{1},\ldots ,X_{k}))+\mathop{\textstyle
\sum }\limits_{i=1}^{k}(\varrho _{A}\circ X_{i})(\Omega (X,X_{1},\ldots ,%
\widehat{X}_{i},\ldots ,X_{k})) \\
&&-\mathop{\textstyle \sum }\limits_{i=1}^{k}\Omega (X_{1},\ldots
,\left\langle X:X_{i}\right\rangle ,\ldots ,X_{k})-\mathop{\textstyle \sum }%
\limits_{i<j}\Omega (X,X_{1},\ldots ,\widehat{X}_{i},\ldots
,\left\langle X_{i}:X_{j}\right\rangle ,\ldots ,X_{k})
\end{eqnarray*}%
and%
\begin{eqnarray*}
&&\left( d^{s}i_{X}\Omega \right) (X_{1},\ldots ,X_{k}) \\
&=&\mathop{\textstyle \sum }\limits_{i=1}^{k}(\varrho _{A}\circ
X_{i})(\Omega (X,X_{1},\ldots ,\widehat{X}_{i},\ldots ,X_{k}))-%
\mathop{\textstyle \sum }\limits_{i<j}\Omega (X,X_{1},\ldots ,\widehat{X}%
_{i},\ldots ,\left\langle X_{i}:X_{j}\right\rangle ,\ldots ,X_{k}).
\end{eqnarray*}%
Hence, we obtain (a) in Lemma \ref{Symmetric_Cartan}.

Moreover,%
\begin{eqnarray*}
&&\left( \mathcal{L}_{X}^{s}i_{Y}\Omega \right) (X_{1},\ldots ,X_{k-1}) \\
&=&(\varrho _{A}\circ X)(\Omega (Y,X_{1},\ldots ,X_{k-1}))-%
\mathop{\textstyle \sum }\limits_{i=1}^{k-1}\Omega (Y,X_{1},\ldots
,\left\langle X:X_{i}\right\rangle ,\ldots ,X_{k-1})
\end{eqnarray*}%
and%
\begin{multline*}
\left( i_{Y}\mathcal{L}_{X}^{s}\Omega \right) (X_{1},\ldots
,X_{k-1})=(\varrho _{A}\circ X)(\Omega (Y,X_{1},\ldots ,X_{k-1})) \\
-\Omega (\left\langle X:Y\right\rangle ,X_{1},\ldots ,X_{k-1})-%
\mathop{\textstyle \sum }\limits_{i=1}^{k-1}\Omega (Y,X_{1},\ldots
,\left\langle X:X_{i}\right\rangle ,\ldots ,X_{k-1}),
\end{multline*}%
which give immediately (b) in Lemma \ref{Symmetric_Cartan}.
\end{proof}

By the definition of the symmetric Lie derivative and properties of
differentiations, we have the following result:

\begin{lemma}
\label{Lemma_Properties_of_Lie_Sym}For $f\in C^{\infty }(M)$, $X\in \Gamma
(A)$, $\omega \in \Gamma (A^{\ast })$, we have

\begin{itemize}
\item[(a)] $\mathcal{L}_{f\cdot X}^{s}\omega =f\cdot \mathcal{L}%
_{X}^{s}\omega -\left( i_{X}\omega \right) \cdot d^{s}f$ and

\item[(b)] $\mathcal{L}_{X}^{s}(f\cdot \omega )=f\cdot \mathcal{L}%
_{X}^{s}\omega +(\varrho _{A}\circ X)(f)\cdot \omega $.
\end{itemize}
\end{lemma}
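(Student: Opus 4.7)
The plan is to verify both identities by evaluating each side on an arbitrary $Y\in \Gamma(A)$, since $\omega$ is a $1$-form and both sides of (a) and (b) are again $1$-forms. The only inputs needed are the defining formula of $\mathcal{L}^{s}_{X}$, the fact that $\varrho_{A}\circ X$ is a derivation on $C^{\infty}(M)$, the Leibniz-kind rule for $\langle\cdot:\cdot\rangle$, and the explicit shape of $d^{s}$ on degree-$0$ tensors, namely $(d^{s}f)(Y)=(\varrho_{A}\circ Y)(f)$ (which follows from the formula of $d^{s}$ since the $\langle X_{i}:X_{j}\rangle$-sum is empty when $k=0$).

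For (b), I would simply write
\[
(\mathcal{L}^{s}_{X}(f\omega))(Y) = (\varrho_{A}\circ X)\bigl(f\cdot \omega(Y)\bigr) - f\cdot \omega(\langle X:Y\rangle),
\]
and expand the first term by the Leibniz rule of the vector field $\varrho_{A}\circ X$. The $f\cdot(\varrho_{A}\circ X)(\omega(Y)) - f\cdot\omega(\langle X:Y\rangle)$ piece recombines into $f\cdot(\mathcal{L}^{s}_{X}\omega)(Y)$ and the remaining piece is $(\varrho_{A}\circ X)(f)\cdot\omega(Y)$, giving exactly (b).

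For (a), the only nontrivial step is computing $\langle fX:Y\rangle$. Since the symmetric bracket is symmetric and satisfies the Leibniz-kind rule in the second slot, I obtain
\[
\langle fX:Y\rangle = \langle Y:fX\rangle = f\langle Y:X\rangle + (\varrho_{A}\circ Y)(f)\cdot X = f\langle X:Y\rangle + (\varrho_{A}\circ Y)(f)\cdot X.
\]
Substituting into $(\mathcal{L}^{s}_{fX}\omega)(Y) = f\cdot(\varrho_{A}\circ X)(\omega(Y)) - \omega(\langle fX:Y\rangle)$ produces
\[
f\cdot(\mathcal{L}^{s}_{X}\omega)(Y) - \omega(X)\cdot(\varrho_{A}\circ Y)(f),
\]
and recognizing $\omega(X) = i_{X}\omega$ and $(\varrho_{A}\circ Y)(f) = (d^{s}f)(Y)$ identifies the correction term as $(i_{X}\omega)\cdot(d^{s}f)$ evaluated at $Y$. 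There is no real obstacle here; the whole lemma is a bookkeeping exercise, and the only point where one has to be slightly careful is in appealing to the symmetry of $\langle\cdot:\cdot\rangle$ to transfer $f$ from the first slot to the second slot where the Leibniz-kind rule is stated.
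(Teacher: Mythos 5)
Your proof is correct and is exactly the direct verification the paper has in mind: the paper states this lemma without proof, remarking only that it follows "by the definition of the symmetric Lie derivative and properties of differentiations." Your computation supplies precisely that argument, including the two points worth being explicit about --- the $C^{\infty}(M)$-linearity of the anchor in part (a) and the transfer of $f$ across the symmetric bracket via symmetry plus the Leibniz-kind rule, together with the identification $(d^{s}f)(Y)=(\varrho_{A}\circ Y)(f)$.
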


\section{The symmetric bracket of a skew-symmetric algebroid with \newline
totally skew-symmetric torsion}

Let $\left( A,\varrho _{A},[\cdot ,\cdot ]\right) $ be a skew-symmetric
algebroid over a manifold $M$ equipped with a pseudo-Riemannian metric $g\in
\Gamma (S^{2}A^{\ast })$ in the vector bundle $A$ and an $A$-connection $%
\nabla $ in $A$. Let $\left\langle \cdot :\cdot \right\rangle $ be the
symmetric product induced by $\nabla $ and $d^{s}$ the symmetrized covariant
derivative. The pseudo-Riemannian metric defines two homomorphisms of vector
bundles%
\begin{equation*}
\flat :A\rightarrow A^{\ast },
\end{equation*}%
\begin{equation*}
\sharp :A^{\ast }\longrightarrow A
\end{equation*}%
by 
\begin{equation*}
\flat (X)=i_{X}g
\end{equation*}%
and%
\begin{equation*}
g(\sharp (\omega ),X)=\omega (X)
\end{equation*}%
for\ $X\in \Gamma (A)$,$\ \omega \in \Gamma \left( A^{\ast }\right) $,
respectively. For any $X\in \Gamma (A)$, the $1$-form $i_{X}g=g(X,\cdot )$
will be denoted, briefly, by $X^{\flat }$.

We say that $\nabla $ is a \textit{connection with totally skew-symmetric
torsion} with respect to a pseudo-Riemannian metric $g$ if the tensor $%
T^{g}\in \Gamma \left( \mathop{\displaystyle \bigotimes }\nolimits%
^{3}A^{\ast }\right) $ given by%
\begin{equation*}
T^{g}(X,Y,Z)=g(T^{\nabla }(X,Y),Z)
\end{equation*}%
for $X,Y,Z\in \Gamma (A)$, is a $3$-form on $A$, i.e., $T^{g}\in \Gamma (%
\mathop{\textstyle \bigwedge }\nolimits^{3}A^{\ast })$  (cf. \cite%
{Agricola-Srni}).

\begin{theorem}
\label{Thm_Conection_Formula_One}Let $X,Z\in \Gamma (A)$.\newline
Then%
\begin{eqnarray*}
g(\nabla _{X}X,Z) &=&g(\sharp (\mathcal{L}_{X}^{a}X^{\flat }-{\textstyle {%
\frac{1 }{2}}}d^{a}(g(X,X)),Z) \\
&&-g(T^{\nabla }(X,Z),X) \\
&&+(\nabla g)(Z,X,X)-{\textstyle {\frac{1 }{2}}}(d^{s}g)(X,X,Z).
\end{eqnarray*}%
In particular, if $\nabla $ is a connection with totally skew-symmetric
torsion compatible with $g$, then 
\begin{equation}
\nabla _{X}X=\sharp (\mathcal{L}_{X}^{a}X^{\flat }-{\textstyle {\frac{1 }{2}}%
}d^{a}(g(X,X)).  \label{formula_for_totally_skew}
\end{equation}
\end{theorem}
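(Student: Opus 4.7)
The plan is to compute both sides by unfolding all definitions (of $\mathcal{L}_X^a$, $d^a$, $\flat$, $\sharp$, $T^\nabla$, $\nabla g$, $d^s g$) and comparing, since every object in the statement is bilinear in the obvious ingredients. Because the equation is tensorial in $Z$ and both sides begin with $g(\cdot,Z)$, it suffices to show that
\[
\bigl(\mathcal{L}_X^a X^\flat - \tfrac{1}{2}d^a(g(X,X))\bigr)(Z)
=g(\nabla_X X,Z)+g(T^\nabla(X,Z),X)-(\nabla g)(Z,X,X)+\tfrac{1}{2}(d^s g)(X,X,Z),
\]
using that $g(\sharp\omega,Z)=\omega(Z)$ for $\omega\in\Gamma(A^\ast)$.

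First I would unwind $(\mathcal{L}_X^a X^\flat)(Z)=(\varrho_A\circ X)(g(X,Z))-g(X,[X,Z])$ directly from the definitions of $\mathcal{L}_X^a$ and $X^\flat$. Next I would substitute $[X,Z]=\nabla_X Z-\nabla_Z X-T^\nabla(X,Z)$ into the second term. To eliminate the anchor terms I would then apply the dual-connection identity twice: $(\varrho_A\circ X)(g(X,Z))=(\nabla_X g)(X,Z)+g(\nabla_X X,Z)+g(X,\nabla_X Z)$ and $(\varrho_A\circ Z)(g(X,X))=(\nabla_Z g)(X,X)+2g(X,\nabla_Z X)$, the second of which isolates $g(X,\nabla_Z X)$ modulo $(\nabla_Z g)(X,X)$. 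Since $d^a$ on the function $g(X,X)$ is simply its anchor-derivative, $\tfrac{1}{2}d^a(g(X,X))(Z)=\tfrac{1}{2}(\varrho_A\circ Z)(g(X,X))$, so subtracting this term kills precisely the $g(X,\nabla_Z X)$ contribution up to a $\tfrac{1}{2}(\nabla_Z g)(X,X)$ remainder.

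After cancellation what remains on the left is
\[
g(\nabla_X X,Z)+g(X,T^\nabla(X,Z))+(\nabla_X g)(X,Z)-\tfrac{1}{2}(\nabla_Z g)(X,X).
\]
The final bookkeeping step — the main (purely notational) obstacle — is recognizing that the combination $(\nabla_X g)(X,Z)-\tfrac{1}{2}(\nabla_Z g)(X,X)$ equals $\tfrac{1}{2}(d^s g)(X,X,Z)-(\nabla g)(Z,X,X)$; this follows from the formula $(d^s g)(X,X,Z)=2(\nabla_X g)(X,Z)+(\nabla_Z g)(X,X)$ and $(\nabla g)(Z,X,X)=(\nabla_Z g)(X,X)$. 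Transposing $g(X,T^\nabla(X,Z))=g(T^\nabla(X,Z),X)$ to the right and using $g\!\circ\!\sharp=\mathrm{id}$ yields the displayed identity.

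For the particular case, total skew-symmetry of the torsion gives $g(T^\nabla(X,Z),X)=T^g(X,Z,X)=0$ because $T^g\in\Gamma(\bigwedge^3 A^\ast)$ has a repeated argument; metric compatibility $\nabla g=0$ kills both the $(\nabla g)(Z,X,X)$ and the $(d^s g)(X,X,Z)$ terms. Dividing out by $g$ (which is non-degenerate and the equation holds for every $Z$) gives formula~(\ref{formula_for_totally_skew}).
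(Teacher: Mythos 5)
Your proposal is correct and is essentially the paper's own computation run in the opposite direction: the paper starts from $(\nabla g)(Z,X,X)-\tfrac{1}{2}(d^{s}g)(X,X,Z)=\tfrac{1}{2}(\nabla_{Z}g)(X,X)-(\nabla_{X}g)(X,Z)$ and expands toward the Lie-derivative terms, while you expand $\mathcal{L}_{X}^{a}X^{\flat}-\tfrac{1}{2}d^{a}(g(X,X))$ toward the covariant-derivative terms, using the same substitution $[X,Z]=\nabla_{X}Z-\nabla_{Z}X-T^{\nabla}(X,Z)$ and the same identity $(d^{s}g)(X,X,Z)=2(\nabla_{X}g)(X,Z)+(\nabla_{Z}g)(X,X)$. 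All steps, including the vanishing of $g(T^{\nabla}(X,Z),X)$ in the totally skew-symmetric case, check out.
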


\begin{proof}
Let $X,Z\in \Gamma (A)$. First, observe that%
\begin{equation*}
(d^{s}g)(X,X,Z)=2(\nabla g)(X,X,Z)+(\nabla g)(Z,X,X).
\end{equation*}%
Therefore, we have%
\begin{equation*}
(\nabla g)(Z,X,X)-{\textstyle{\frac{1}{2}}}(d^{s}g)(X,X,Z)={\textstyle{\frac{%
1}{2}}}(\nabla g)(Z,X,X)-(\nabla g)(X,X,Z).
\end{equation*}%
Next, observe that%
\begin{eqnarray*}
&&{\textstyle{\frac{1}{2}}}(\nabla g)(Z,X,X)-(\nabla g)(X,X,Z) \\
&=&{\textstyle{\frac{1}{2}}}(\nabla _{Z}g)(X,X)-(\nabla _{X}g)(X,Z) \\
&=&{\textstyle{\frac{1}{2}}}\varrho _{A}(Z)(g(X,X))-g(\nabla
_{Z}X,X)-\varrho _{A}(X)(g(X,Z))+g(\nabla _{X}X,Z)+g(X,\nabla _{X}Z) \\
&=&{\textstyle{\frac{1}{2}}}\varrho _{A}(Z)(g(X,X))+g(\nabla _{X}Z-\nabla
_{Z}X-\left[ X,Z\right] ,X) \\
&&-\varrho _{A}(X)(g(X,Z))+g(\left[ X,Z\right] ,X)+g(\nabla _{X}X,Z).
\end{eqnarray*}%
Since 
\begin{equation*}
\varrho _{A}(Z)(g(X,X))=d^{a}(g(X,X))(Z)=g(\sharp (d^{a}(g(X,X))),Z)
\end{equation*}%
and%
\begin{eqnarray*}
\left( \mathcal{L}_{X}^{a}X^{\flat }\right) (Z) &=&\varrho _{A}(X)(X^{\flat
}(Z))-X^{\flat }(\left[ X,Z\right] ) \\
&=&\varrho _{A}(X)(g(X,Z))-g(X,\left[ X,Z\right] ),
\end{eqnarray*}%
we have 
\begin{eqnarray*}
&&{\textstyle{\frac{1}{2}}}(\nabla g)(Z,X,X)-(\nabla g)(X,X,Z) \\
&=&{\textstyle{\frac{1}{2}}}d^{a}(g(X,X))(Z)+g(T^{\nabla }(X,Z),X)-\left( 
\mathcal{L}_{X}^{a}X^{\flat }\right) (Z)+g(\nabla _{X}X,Z).
\end{eqnarray*}%
Moreover, if $\nabla $ is a metric connection with totally skew-symmetric
torsion, then $\nabla g=0$, $d^{s}g=0$, and 
\begin{equation*}
g(T^{\nabla }(X,Z),X)=-g(T^{\nabla }(X,X),Z)=0,
\end{equation*}%
and, in consequence, we obtain (\ref{formula_for_totally_skew}). This
completes the proof.
\end{proof}

Applying Theorem \ref{Thm_Conection_Formula_One}, we have

\begin{theorem}
\label{Thm_Sym_Product_Second}Let $X,Y,Z\in \Gamma (A)$ and let $%
\left\langle X:Y\right\rangle $ be the symmetric bracket of sections induced
by $\nabla $, i.e., $\left\langle X:Y\right\rangle =\nabla _{X}Y+\nabla
_{Y}X $. Then%
\begin{eqnarray}
g(\left\langle X:Y\right\rangle ,Z) &=&g(\sharp (\mathcal{L}_{X}^{a}Y^{\flat
}+\mathcal{L}_{Y}^{a}X^{\flat }-d^{a}(g(X,Y))),Z)  \label{wzor_g_connection}
\\
&&-g(T^{\nabla }(X,Z),Y)-g(T^{\nabla }(Y,Z),X)  \notag \\
&&+2(\nabla g)(Z,X,Y)-(d^{s}g)(X,Y,Z).  \notag
\end{eqnarray}
\end{theorem}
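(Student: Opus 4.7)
The natural plan is to obtain this formula from Theorem \ref{Thm_Conection_Formula_One} by \textbf{polarization}. Since $\nabla$ is $\mathbb{R}$-bilinear,
\begin{equation*}
\nabla_{X+Y}(X+Y) = \nabla_X X + \nabla_Y Y + \nabla_X Y + \nabla_Y X,
\end{equation*}
so $\langle X:Y\rangle = \nabla_{X+Y}(X+Y) - \nabla_X X - \nabla_Y Y$. I would apply Theorem \ref{Thm_Conection_Formula_One} three times, once with $X$ replaced by $X+Y$ and once each with $X$ and $Y$, pair $g(\cdot,Z)$ throughout, and subtract. The claim is then that each of the five groups of terms on the right-hand side of Theorem \ref{Thm_Conection_Formula_One} polarizes to the corresponding term in \eqref{wzor_g_connection}.

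The verifications are straightforward but require checking bilinearity or symmetry in each slot. For the alternating Lie derivative part, I would use that $Z \mapsto Z^\flat$ is $\mathbb{R}$-linear and that $(U,V) \mapsto \mathcal{L}_U^a V^\flat$ is $\mathbb{R}$-bilinear; hence
\begin{equation*}
\mathcal{L}_{X+Y}^a(X+Y)^\flat - \mathcal{L}_X^a X^\flat - \mathcal{L}_Y^a Y^\flat = \mathcal{L}_X^a Y^\flat + \mathcal{L}_Y^a X^\flat.
\end{equation*}
For the $d^a$ term I would use the polarization identity $g(X+Y,X+Y) - g(X,X) - g(Y,Y) = 2g(X,Y)$, producing $-d^a(g(X,Y))$ after the factor $-\tfrac{1}{2}$. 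The torsion group expands via $\mathbb{R}$-bilinearity of $T^\nabla$ and $g$: the diagonal terms $g(T^\nabla(X,Z),X)$ and $g(T^\nabla(Y,Z),Y)$ cancel with the subtracted copies, leaving exactly $-g(T^\nabla(X,Z),Y) - g(T^\nabla(Y,Z),X)$. For $(\nabla g)(Z,\cdot,\cdot)$ I would use that $g$ (hence $\nabla_Z g$) is symmetric in its two arguments, so the cross terms combine to give $2(\nabla g)(Z,X,Y)$; and similarly for $(d^s g)$, the total symmetry of $d^s g$ on the symmetric tensor $g$ (in particular, symmetry in the first two slots) collapses the cross terms into $-(d^s g)(X,Y,Z)$ after the factor $-\tfrac{1}{2}$.

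Assembling the five pieces then yields exactly \eqref{wzor_g_connection}. There is no real obstacle here; the main thing to be careful about is keeping the signs and factors of $\tfrac{1}{2}$ straight during the polarization, and noting explicitly that the symmetry of $g$ is what allows the mixed $(\nabla g)$ and $(d^s g)$ terms to collect without extra cross contributions. No new identities beyond Theorem \ref{Thm_Conection_Formula_One}, bilinearity of $\mathcal{L}^a$, $T^\nabla$, $\flat$, $\sharp$, and symmetry of $g$ are needed.
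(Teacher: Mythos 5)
Your proposal is correct and follows exactly the paper's own argument: the paper also derives \eqref{wzor_g_connection} by polarizing Theorem \ref{Thm_Conection_Formula_One} via $\left\langle X:Y\right\rangle =\nabla _{X+Y}(X+Y)-\nabla _{X}X-\nabla _{Y}Y$ and collecting the five groups of terms using bilinearity, the symmetry of $g$, and the symmetry of $d^{s}g$ in its first two slots. Nothing is missing.
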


\begin{proof}
Using the following polarization formula%
\begin{equation*}
\left\langle X:Y\right\rangle =\nabla _{X+Y}\left( X+Y\right) -\nabla
_{X}X-\nabla _{Y}Y
\end{equation*}%
and Theorem \ref{Thm_Conection_Formula_One}, we obtain%
\begin{eqnarray*}
\left\langle X:Y\right\rangle  &=&g(\sharp (\mathcal{L}_{X+Y}^{a}\left(
X+Y\right) ^{\flat }-{\textstyle{\frac{1}{2}}}d^{a}(g(X+Y,X+Y)),Z)-g(T^{%
\nabla }(X+Y,Z),X+Y) \\
&&+(\nabla g)(Z,X+Y,X+Y)-{\textstyle{\frac{1}{2}}}(d^{s}g)(X+Y,X+Y,Z) \\
&&-g(\sharp (\mathcal{L}_{X}^{a}X^{\flat }-{\textstyle{\frac{1}{2}}}%
d^{a}(g(X,X)),Z) \\
&&+g(T^{\nabla }(X,Z),X)-(\nabla g)(Z,X,X)+{\textstyle{\frac{1}{2}}}%
(d^{s}g)(X,X,Z) \\
&&-g(\sharp (\mathcal{L}_{Y}^{a}Y^{\flat }-{\textstyle{\frac{1}{2}}}%
d^{a}(g(Y,Y)) \\
&&+g(T^{\nabla }(Y,Z),Y)-(\nabla g)(Z,Y,Y)+{\textstyle{\frac{1}{2}}}%
(d^{s}g)(Y,Y,Z).
\end{eqnarray*}%
First observe that%
\begin{equation*}
\mathcal{L}_{X+Y}^{a}\left( X+Y\right) ^{\flat }-\mathcal{L}_{X}^{a}X^{\flat
}-\mathcal{L}_{Y}^{a}Y^{\flat }=\mathcal{L}_{X}^{a}Y^{\flat }+\mathcal{L}%
_{Y}^{a}X^{\flat }
\end{equation*}%
and%
\begin{equation*}
-{\textstyle{\frac{1}{2}}}d^{a}(g(X+Y,X+Y)+{\textstyle{\frac{1}{2}}}%
d^{a}(g(X,X))+{\textstyle{\frac{1}{2}}}d^{a}(g(Y,Y)=-d^{a}(g(X,Y)).
\end{equation*}%
Since $g$ is a symmetric tensor and $T^{\nabla }$ is skew-symmetric, we
conclude that%
\begin{equation*}
-g(T^{\nabla }(X+Y,Z),X+Y)+g(T^{\nabla }(X,Z),X)+g(T^{\nabla }(Y,Z),Y)
\end{equation*}%
is equal to%
\begin{equation*}
-g(T^{\nabla }(X,Z),Y)-g(T^{\nabla }(Y,Z),X).
\end{equation*}%
Moreover,%
\begin{equation*}
(\nabla g)(Z,X+Y,X+Y)-(\nabla g)(Z,X,X)-(\nabla g)(Z,Y,Y)=2(\nabla g)(Z,X,Y)
\end{equation*}%
and%
\begin{align*}
(d^{s}g)(X,Y,Z)& ={\textstyle{\frac{1}{2}}}(d^{s}g)(X,Y,Z)+{\textstyle{\frac{%
1}{2}}}(d^{s}g)(Y,X,Z) \\
& ={\textstyle{\frac{1}{2}}}(d^{s}g)(X+Y,X+Y,Z)-{\textstyle{\frac{1}{2}}}%
(d^{s}g)(X,X,Z)-{\textstyle{\frac{1}{2}}}(d^{s}g)(Y,Y,Z).
\end{align*}%
Hence, it is clear that some summands of $\left\langle X:Y\right\rangle $
cancel. This establishes the formula (\ref{wzor_g_connection}).
\end{proof}

The formula in Theorem \ref{Thm_Sym_Product_Second} gives an explicit
formula of symmetric bracket defined by any metric connection with totally
skew-symmetric torsion.

\begin{corollary}
Let $\nabla $ be any metric $A$-connection in $A$ with totally
skew-symmetric torsion with respect to a pseudo-Riemannian metric $g$. Then%
\begin{equation*}
\nabla _{X}Y+\nabla _{Y}X=\sharp (\mathcal{L}_{X}^{a}Y^{\flat }+\mathcal{L}%
_{Y}^{a}X^{\flat }-d^{a}(g(X,Y)).
\end{equation*}
\end{corollary}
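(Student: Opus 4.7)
The plan is to read off the corollary directly from Theorem \ref{Thm_Sym_Product_Second} by killing the three correction terms on its right-hand side under the hypotheses that $\nabla g=0$ and $T^{g}\in \Gamma (\mathop{\textstyle \bigwedge}\nolimits^{3}A^{\ast })$. First I would note that metricity $\nabla g=0$ forces $(\nabla g)(Z,X,Y)=0$ pointwise; since $d^{s}g$ is, up to a constant, the symmetrization of $\nabla g$, it also vanishes, so the last line of (\ref{wzor_g_connection}) disappears.

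Next I would handle the two torsion terms. Rewriting them via $T^{g}(U,V,W)=g(T^{\nabla }(U,V),W)$, they read
\[
-g(T^{\nabla }(X,Z),Y)-g(T^{\nabla }(Y,Z),X)=-T^{g}(X,Z,Y)-T^{g}(Y,Z,X).
\]
Using the total skew-symmetry of $T^{g}$, we have $T^{g}(X,Z,Y)=-T^{g}(X,Y,Z)$ and $T^{g}(Y,Z,X)=T^{g}(X,Y,Z)$ (two transpositions), so the sum is zero. Hence the middle line of (\ref{wzor_g_connection}) also vanishes.

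What remains of Theorem \ref{Thm_Sym_Product_Second} is the identity
\[
g(\nabla _{X}Y+\nabla _{Y}X,Z)=g\bigl(\sharp (\mathcal{L}_{X}^{a}Y^{\flat }+\mathcal{L}_{Y}^{a}X^{\flat }-d^{a}(g(X,Y))),Z\bigr),
\]
valid for every $Z\in \Gamma (A)$. Since $g$ is a pseudo-Riemannian metric, the associated $\flat :A\rightarrow A^{\ast }$ is a bundle isomorphism, and non-degeneracy of $g$ on sections allows cancellation of $Z$, yielding the claimed formula.

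I do not expect any real obstacle here: the argument is a direct specialization of the already-proven Theorem \ref{Thm_Sym_Product_Second}. The only point requiring a moment's care is the cancellation of the torsion terms, where one must correctly track the two transpositions needed to reduce both $T^{g}(X,Z,Y)$ and $T^{g}(Y,Z,X)$ to $\pm T^{g}(X,Y,Z)$; beyond that the proof is essentially one line.
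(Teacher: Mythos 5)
Your proposal is correct and follows exactly the route the paper intends: the corollary is stated as an immediate specialization of Theorem \ref{Thm_Sym_Product_Second}, with $\nabla g=0$ killing the $(\nabla g)$ and $(d^{s}g)$ terms and the total skew-symmetry of $T^{g}$ cancelling the two torsion terms, after which non-degeneracy of $g$ removes the test section $Z$. Your sign bookkeeping for $T^{g}(X,Z,Y)$ and $T^{g}(Y,Z,X)$ is accurate, so nothing further is needed.
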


\section{A general metric compatibility condition of connections having
totally skew-symmetric torsion. The Levi-Civita connection}

Let $\left( A,\varrho _{A},[\cdot ,\cdot ]\right) $ be a skew-symmetric
algebroid over a manifold $M$ equipped with a pseudo-Riemannian metric $g\in
\Gamma (S^{2}A^{\ast })$ in the vector bundle $A$ and a symmetric bracket $%
\left\langle \cdot :\cdot \right\rangle :\Gamma (A)\times \Gamma
(A)\rightarrow \Gamma (A)$. By definition, we recall that the symmetric
bracket is an $\mathbb{R}$-bilinear symmetric mapping which satisfies the
following Leibniz-kind rule:%
\begin{equation*}
\left\langle X:fY\right\rangle =f\left\langle X:Y\right\rangle +(\varrho
_{A}\circ X)(f)\cdot Y
\end{equation*}%
for $X,Y\in \Gamma (A)$, $f\in C^{\infty }(M)$.

Given the bundle metric $g$ on $A$, there is a unique $A$-connection in $A$
which is torsion-free and metric-compatible (i.e., $T^{\nabla }=0$ and $%
\nabla g=0$). We call such an $A$-connection the \textit{Levi-Civita
connection} with respect to $g$. Let $\mathcal{L}^{s}$ and $d^{s}$ denote
the symmetric Lie derivative and the symmetric derivative operator,
respectively, and both are induced by $\left\langle \cdot :\cdot
\right\rangle $.

\begin{theorem}
\label{Theorem_Comptibility_by_Lie_der}Let $\nabla $ be an $A$-connection in 
$A$ with totally skew-symmetric torsion with respect to a pseudo-Riemannian
metric $g$ on $A$ given by 
\begin{equation}
\nabla _{X}Y={\textstyle {\frac{1 }{2}}}\left( \left[ X,Y\right]
+\left\langle X:Y\right\rangle \right) +{\textstyle {\frac{1 }{2}}}T(X,Y)
\label{Formula_totally_skew_sym}
\end{equation}%
for $X,Y\in \Gamma (A)$. Then%
\begin{equation*}
(i_{X}\circ \nabla )g={\textstyle {\frac{1 }{2}}}\left( \mathcal{L}_{X}^{a}+%
\mathcal{L}_{X}^{s}\right) g
\end{equation*}%
for$\ X\in \Gamma (A)$.
\end{theorem}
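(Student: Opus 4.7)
The plan is to unfold $(i_X\circ\nabla)g$ as the ordinary covariant derivative tensor $(\nabla_X g)(Y,Z)$, substitute the defining expression (\ref{Formula_totally_skew_sym}) for $\nabla$, and then read off the right-hand side as the sum of two Lie derivatives plus a torsion remainder which vanishes because of total skew-symmetry.

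More precisely, for $X,Y,Z\in\Gamma(A)$ I would begin with the identity
\begin{equation*}
(i_X\nabla g)(Y,Z)=(\nabla_X g)(Y,Z)=(\varrho_A\circ X)(g(Y,Z))-g(\nabla_X Y,Z)-g(Y,\nabla_X Z),
\end{equation*}
coming straight from the definition of the induced connection on $\bigotimes^2 A^{\ast}$. Next I would plug formula (\ref{Formula_totally_skew_sym}) into each of $\nabla_X Y$ and $\nabla_X Z$, splitting every contribution into three pieces governed by $[\cdot,\cdot]$, $\langle\cdot:\cdot\rangle$, and $T$ respectively. Grouping the bracket pieces together reproduces exactly
\begin{equation*}
(\mathcal{L}_X^a g)(Y,Z)=(\varrho_A\circ X)(g(Y,Z))-g([X,Y],Z)-g(Y,[X,Z]),
\end{equation*}
scaled by $\tfrac12$, and grouping the symmetric-bracket pieces together gives $\tfrac12(\mathcal{L}_X^s g)(Y,Z)$; this uses the fact that both Lie derivatives share the same ``anchor-derivative'' head $(\varrho_A\circ X)(g(Y,Z))$, of which each half contributes $\tfrac12$ so the sum reconstitutes the full head.

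The only leftover is the torsion remainder
\begin{equation*}
-\tfrac12\bigl(g(T^{\nabla}(X,Y),Z)+g(Y,T^{\nabla}(X,Z))\bigr)
  =-\tfrac12\bigl(T^g(X,Y,Z)+T^g(X,Z,Y)\bigr),
\end{equation*}
and here the total skew-symmetry of $T^g$ forces $T^g(X,Z,Y)=-T^g(X,Y,Z)$, so the bracket vanishes identically. Combining the three computations gives $(i_X\nabla g)(Y,Z)=\tfrac12\bigl((\mathcal{L}_X^a+\mathcal{L}_X^s)g\bigr)(Y,Z)$, which is the claim.

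The main obstacle is purely bookkeeping: keeping the $\tfrac12$ factors correct when collecting the anchor-derivative terms, and making sure that the torsion contribution appears in the precise symmetric form $T^g(X,Y,Z)+T^g(X,Z,Y)$ so that total skew-symmetry—rather than just skew-symmetry in the first two slots—is what is needed to kill it. No deeper identity (Cartan, Jacobi, or a property of $d^s$ or $d^a$) is required; the assertion is a direct algebraic consequence of (\ref{Formula_totally_skew_sym}) and of $T^g\in\Gamma(\bigwedge^3 A^{\ast})$.
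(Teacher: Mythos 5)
Your proposal is correct and follows essentially the same route as the paper: expand $(\nabla_X g)(Y,Z)$, substitute (\ref{Formula_totally_skew_sym}), split the anchor-derivative head into two halves to reconstitute $\tfrac12\mathcal{L}_X^a g$ and $\tfrac12\mathcal{L}_X^s g$, and kill the torsion remainder via $T^g(X,Z,Y)=-T^g(X,Y,Z)$. Nothing is missing.
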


\begin{proof}
Let $X,Y,Z\in \Gamma (A)$. Since $T\in \Gamma \left( 
\mathop{\displaystyle
\bigwedge }\nolimits^{2}A^{\ast }\otimes A \right) $ is a $2$-skew-symmetric
tensor with the property that%
\begin{equation*}
g(Y,T(X,Z))=g(T(X,Z),Y)=-g(T(X,Y),Z),
\end{equation*}%
we have,%
\begin{eqnarray*}
\left( \nabla _{X}g\right) (Y,Z) &=&\rho _{A}(X)(g(Y,Z))-g(\nabla
_{X}Y,Z)-g(Y,\nabla _{X}Z) \\
&=&{\textstyle {\frac{1 }{2}}}\left( \rho
_{A}(X)(g(Y,Z))-g([X,Y],Z)-g(Y,[X,Z])\right) \\
&&+{\textstyle {\frac{1 }{2}}}\left( \rho _{A}(X)(g(Y,Z))-g(\left\langle
X:Y\right\rangle ,Z)-g(Y,\left\langle X:Z\right\rangle )\right) \\
&&-{\textstyle {\frac{1 }{2}}}g(T(X,Y),Z)-{\textstyle {\frac{1 }{2}}}%
g(Y,T(X,Z)) \\
&=&{\textstyle {\frac{1 }{2}}}\left( \mathcal{L}_{X}^{a}g+\mathcal{L}%
_{X}^{s}g\right) (Y,Z)+0.
\end{eqnarray*}
\end{proof}

Hence, we can conclude the following condition on a connection with totally
skew-symmetric torsion to be a metric connection:

\begin{corollary}
If $\nabla $ is an $A$-connection with totally skew-symmetric torsion with
respect to $g$ given by \emph{(\ref{Formula_totally_skew_sym})}, then $%
\nabla $ is metric with respect to $g$ if and only if%
\begin{equation*}
\mathcal{L}_{X}^{a}g=-\mathcal{L}_{X}^{s}g
\end{equation*}%
for any $X\in \Gamma (A)$.
\end{corollary}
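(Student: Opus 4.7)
The plan is to derive the corollary essentially as an immediate consequence of Theorem \ref{Theorem_Comptibility_by_Lie_der}, so I will not need any new computation beyond unpacking the definition of metric-compatibility. Recall that $\nabla$ is metric with respect to $g$ means $\nabla g = 0$, which is equivalent to $\nabla_X g = 0$ for every $X \in \Gamma(A)$; and by the definition of the operator $\nabla : \Gamma(\bigotimes^k A^\ast) \to \Gamma(\bigotimes^{k+1} A^\ast)$ together with the substitution operator, one has $(i_X \circ \nabla)g = \nabla_X g$ as tensors in $\Gamma(\bigotimes^2 A^\ast)$.

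Starting from this observation, I would write: fix $X \in \Gamma(A)$; then Theorem \ref{Theorem_Comptibility_by_Lie_der} gives
\[
\nabla_X g \;=\; (i_X \circ \nabla)g \;=\; \tfrac{1}{2}\bigl(\mathcal{L}_X^a g + \mathcal{L}_X^s g\bigr).
\]
Hence $\nabla_X g = 0$ if and only if $\mathcal{L}_X^a g + \mathcal{L}_X^s g = 0$, i.e. $\mathcal{L}_X^a g = -\mathcal{L}_X^s g$. Quantifying over all $X \in \Gamma(A)$ then yields the two directions of the corollary simultaneously: if $\nabla g = 0$, the identity forces $\mathcal{L}_X^a g = -\mathcal{L}_X^s g$ for every $X$; conversely, if the Lie-derivative balance holds for every $X$, then $\nabla_X g = 0$ for every $X$, so $\nabla g = 0$.

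There is no real obstacle here; the only point one has to be a bit careful about is noting that the hypothesis of Theorem \ref{Theorem_Comptibility_by_Lie_der} is already in force (namely that $\nabla$ has totally skew-symmetric torsion and is written in the form (\ref{Formula_totally_skew_sym})), so the identity is available without requiring $\nabla g = 0$ in advance. Thus the proof amounts to one line plus the remark that the condition is pointwise in $X$.
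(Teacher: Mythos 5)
Your proof is correct and follows exactly the route the paper intends: the corollary is an immediate unpacking of Theorem \ref{Theorem_Comptibility_by_Lie_der} via the identity $(i_X\circ\nabla)g=\nabla_X g$ and the equivalence of $\nabla g=0$ with $\nabla_X g=0$ for all $X$. Nothing further is needed.
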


Now, we recall some properties of the (skew-symmetric) Lie derivative.

\begin{lemma}
\label{Lemma_Skew_symm_Lie_derivative}For $f\in C^{\infty }(M)$, $X\in
\Gamma (A)$, $\omega \in \Gamma (A^{\ast })$, we have

\begin{itemize}
\item[(a)] $\mathcal{L}_{f\cdot X}^{a}\omega =f\cdot \mathcal{L}%
_{X}^{a}\omega +\left( i_{X}\omega \right) \cdot d^{a}f$ and

\item[(b)] $\mathcal{L}_{X}^{a}(f\cdot \omega )=f\cdot \mathcal{L}%
_{X}^{a}\omega +(\varrho _{A}\circ X)(f)\cdot \omega $.
\end{itemize}
\end{lemma}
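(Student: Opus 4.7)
The plan is to verify both identities by direct computation, evaluating each side on an arbitrary section $Y\in\Gamma(A)$ and using only the definition of $\mathcal{L}^a_X$ on a $1$-form together with the $C^\infty(M)$-linearity of the anchor $\varrho_A$ and the Leibniz rule (\ref{Leibniz-rule}) for the skew-symmetric bracket. Alternatively, both statements drop out in two lines from Cartan's formula $\mathcal{L}^a_X=i_Xd^a+d^ai_X$ in Lemma \ref{Alternating_Cartan}(a); I would probably present the direct computation, since that is the route already used for Lemma \ref{Lemma_Properties_of_Lie_Sym}.

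For part (a), I would start from
\[
(\mathcal{L}^a_{fX}\omega)(Y)=(\varrho_A\circ(fX))(\omega(Y))-\omega([fX,Y]).
\]
The anchor is $C^\infty(M)$-linear in its argument, so the first term becomes $f\cdot(\varrho_A\circ X)(\omega(Y))$. For the second term, the skew-symmetry of $[\cdot,\cdot]$ and the Leibniz rule give $[fX,Y]=-[Y,fX]=f[X,Y]-(\varrho_A\circ Y)(f)\cdot X$. Substituting back and regrouping produces $f\cdot(\mathcal{L}^a_X\omega)(Y)+(\varrho_A\circ Y)(f)\cdot\omega(X)$. Since $(\varrho_A\circ Y)(f)=(d^af)(Y)$ and $\omega(X)=i_X\omega$, this is exactly $\bigl(f\cdot\mathcal{L}^a_X\omega+(i_X\omega)\cdot d^af\bigr)(Y)$, which establishes (a).

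For part (b), I would write
\[
(\mathcal{L}^a_X(f\omega))(Y)=(\varrho_A\circ X)(f\,\omega(Y))-f\,\omega([X,Y]),
\]
and expand the first term using the fact that $\varrho_A\circ X$ acts as a derivation on $C^\infty(M)$, obtaining $(\varrho_A\circ X)(f)\cdot\omega(Y)+f\cdot(\varrho_A\circ X)(\omega(Y))$. The $f$-terms collect to $f\cdot(\mathcal{L}^a_X\omega)(Y)$, yielding (b). No real obstacle is expected; the only point that requires care is the sign produced when pulling $f$ out of the skew-symmetric bracket in (a), which is precisely what accounts for the extra $(i_X\omega)\cdot d^af$ term — a phenomenon dual to the sign in Lemma \ref{Lemma_Properties_of_Lie_Sym}(a), where the symmetric bracket is symmetric rather than skew-symmetric.
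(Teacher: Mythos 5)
Your computation is correct, and it matches the (implicit) approach of the paper, which states this lemma without proof as a routine consequence of the definitions; the analogous Lemma \ref{Lemma_Properties_of_Lie_Sym} is likewise left to the reader. In particular you correctly identify the one point of substance, namely that $[fX,Y]=f[X,Y]-(\varrho_A\circ Y)(f)\cdot X$ by skew-symmetry plus the Leibniz rule, which produces the $+(i_X\omega)\cdot d^af$ term here as opposed to the $-(i_X\omega)\cdot d^sf$ term in the symmetric case.
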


\begin{theorem}
\label{Theorem_Sym_Bracket_by_Lie_der}Given a skew-symmetric algebroid $%
\left( A,\varrho _{A},[\cdot ,\cdot ]\right) $, we define 
\begin{equation*}
\left\langle X:Y\right\rangle ^{s}:\Gamma (A)\times \Gamma (A)\rightarrow
\Gamma (A)
\end{equation*}%
by%
\begin{equation}
\left\langle X:Y\right\rangle ^{s}=\sharp (\mathcal{L}_{X}^{a}Y^{\flat }+%
\mathcal{L}_{Y}^{a}X^{\flat }-d^{a}(g(X,Y))
\label{formula_for_symetric_bracket}
\end{equation}%
for $X,Y\in \Gamma (A)$. Then, $\left\langle \cdot :\cdot \right\rangle ^{s}$
is a symmetric bracket that defines the symmetric Lie derivative $\mathcal{L}%
^{s}$ satisfying $\mathcal{L}_{X}^{s}g=-\mathcal{L}_{X}^{a}g$.
\end{theorem}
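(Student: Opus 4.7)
The plan is to verify the two assertions of the theorem in sequence: first that $\langle \cdot : \cdot \rangle^{s}$ is a symmetric bracket in the sense of Section~3, and then that the symmetric Lie derivative induced by it satisfies $\mathcal{L}_{X}^{s} g = -\mathcal{L}_{X}^{a} g$.

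For the bracket axioms, $\mathbb{R}$-bilinearity is immediate from the $\mathbb{R}$-linearity of $\mathcal{L}^{a}$, $d^{a}$, $\flat$ and $\sharp$, and symmetry $\langle X:Y\rangle^{s}=\langle Y:X\rangle^{s}$ is manifest in \eqref{formula_for_symetric_bracket}. The only substantive point is the Leibniz-kind rule. I would expand the three summands of $\langle X : fY\rangle^{s}$ using Lemma~\ref{Lemma_Skew_symm_Lie_derivative}: by part~(b), $\mathcal{L}_{X}^{a}(fY)^{\flat}=\mathcal{L}_{X}^{a}(fY^{\flat})=f\mathcal{L}_{X}^{a}Y^{\flat}+(\varrho_{A}\circ X)(f)Y^{\flat}$; by part~(a), $\mathcal{L}_{fY}^{a}X^{\flat}=f\mathcal{L}_{Y}^{a}X^{\flat}+(i_{Y}X^{\flat})d^{a}f=f\mathcal{L}_{Y}^{a}X^{\flat}+g(X,Y)d^{a}f$; and by the derivation property of $d^{a}$, $d^{a}(g(X,fY))=d^{a}(fg(X,Y))=fd^{a}(g(X,Y))+g(X,Y)d^{a}f$. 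The two $g(X,Y)d^{a}f$ terms cancel, and applying $\sharp$ yields $\langle X:fY\rangle^{s}=f\langle X:Y\rangle^{s}+(\varrho_{A}\circ X)(f)Y$, as required.

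For the compatibility identity, I would show directly that $(\mathcal{L}_{X}^{s}g+\mathcal{L}_{X}^{a}g)(Y,Z)=0$ for all $Y,Z\in \Gamma(A)$. Unwinding the two definitions, this sum equals
\begin{equation*}
2(\varrho_{A}\circ X)(g(Y,Z))-g(\langle X:Y\rangle^{s}+[X,Y],Z)-g(Y,\langle X:Z\rangle^{s}+[X,Z]).
\end{equation*}
From the definition of $\sharp$ and formula \eqref{formula_for_symetric_bracket},
\begin{equation*}
g(\langle X:Y\rangle^{s},Z)=(\mathcal{L}_{X}^{a}Y^{\flat})(Z)+(\mathcal{L}_{Y}^{a}X^{\flat})(Z)-(d^{a}(g(X,Y)))(Z),
\end{equation*}
which, using the formulas for $\mathcal{L}^{a}$ on $1$-forms and for $d^{a}$ on functions, expands as
\begin{equation*}
(\varrho_{A}X)g(Y,Z)-g(Y,[X,Z])+(\varrho_{A}Y)g(X,Z)-g(X,[Y,Z])-(\varrho_{A}Z)g(X,Y).
\end{equation*}
The analogous expression for $g(Y,\langle X:Z\rangle^{s})=g(\langle X:Z\rangle^{s},Y)$ follows by interchanging $Y$ and $Z$. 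Adding the bracket terms $g([X,Y],Z)$ and $g(Y,[X,Z])$ and exploiting the symmetry of $g$ together with the skew-symmetry of $[\cdot,\cdot]$, the pairs involving $(\varrho_{A}Y)$, $(\varrho_{A}Z)$, $g(X,[Y,Z])$, and $g([X,Y],Z)$ all cancel, leaving precisely $2(\varrho_{A}\circ X)(g(Y,Z))$, which is what was needed.

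The calculation is really just careful bookkeeping; the one step needing attention is recognising that since $g(X,Y)$ is a smooth function, $d^{a}(g(X,Y))$ evaluated on $Z$ is simply $(\varrho_{A}\circ Z)(g(X,Y))$, and this third term is exactly what forces the complete cancellation between the $\mathcal{L}^{s}$-part and the $\mathcal{L}^{a}$-part.
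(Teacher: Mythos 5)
Your proposal is correct and follows essentially the same route as the paper: the Leibniz rule via Lemma \ref{Lemma_Skew_symm_Lie_derivative} with cancellation of the two $g(X,Y)d^{a}f$ terms, and the identity $\mathcal{L}_{X}^{s}g=-\mathcal{L}_{X}^{a}g$ by expanding $g(\langle X:Y\rangle^{s},Z)$ and $g(Y,\langle X:Z\rangle^{s})$ and cancelling via the symmetry of $g$ and skew-symmetry of $[\cdot,\cdot]$. Writing the goal as $(\mathcal{L}_{X}^{s}g+\mathcal{L}_{X}^{a}g)(Y,Z)=0$ rather than computing $(\mathcal{L}_{X}^{s}g)(Y,Z)$ directly is only a cosmetic rearrangement of the same calculation.
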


\begin{proof}
It is evident that $\left\langle \cdot :\cdot \right\rangle ^{s}$ is a
symmetric and $\mathbb{R}$-bilinear mapping. Let $X,Y,Z\in \Gamma (A)$.
Lemma \ref{Lemma_Skew_symm_Lie_derivative} now gives%
\begin{equation*}
\mathcal{L}_{X}^{a}(fY)^{\flat }=f\mathcal{L}_{X}^{a}Y^{\flat }+(\varrho
_{A}\circ X)(f)Y^{\flat }
\end{equation*}%
and%
\begin{equation*}
\mathcal{L}_{fY}^{a}X^{\flat }=f\mathcal{L}_{Y}^{a}X^{\flat }+g(X,Y)d^{a}f.
\end{equation*}%
Since%
\begin{equation*}
d^{a}(g(X,fY))=fd^{a}(g(X,Y))+g(X,Y)d^{a}f,
\end{equation*}%
we conclude that $\left\langle \cdot :\cdot \right\rangle ^{s}$ satisfies
the Leibniz rule. In consequence, $\left\langle \cdot :\cdot \right\rangle
^{s}$ is a symmetric bracket. Observe that%
\begin{eqnarray*}
g(\left\langle X:Y\right\rangle ^{s},Z) &=&\left( \left\langle
X:Y\right\rangle ^{s}\right) ^{\flat }(Z)=(\mathcal{L}_{X}^{a}Y^{\flat }+%
\mathcal{L}_{Y}^{a}X^{\flat }-d^{a}(g(X,Y))(Z) \\
&=&(\varrho _{A}\circ X)(g(Y,Z))-g(Y,\left[ X,Z\right] ) \\
&&+(\varrho _{A}\circ Y)(g(X,Z))-g(X,\left[ Y,Z\right] )-(\varrho _{A}\circ
Z)(g(X,Y)).
\end{eqnarray*}%
Similarly,%
\begin{eqnarray*}
g(Y,\left\langle X:Z\right\rangle ^{s}) &=&(\varrho _{A}\circ
X)(g(Y,Z))-g(Z, \left[ X,Y\right] ) \\
&&+(\varrho _{A}\circ Z)(g(X,Y))-g(X,\left[ Z,Y\right] )-(\varrho _{A}\circ
Y)(g(X,Z)).
\end{eqnarray*}%
Therefore,%
\begin{eqnarray*}
(\mathcal{L}_{X}^{s}g)(Y,Z) &=&(\varrho _{A}\circ X)(g(Y,Z))-g(\left\langle
X:Y\right\rangle ^{s},Z)-g(Y,\left\langle X:Z\right\rangle ^{s}) \\
&=&g(Y,\left[ X,Z\right] )+g(X,\left[ Y,Z\right] ) \\
&&-(\varrho _{A}\circ X)(g(Y,Z))+g(Z,\left[ X,Y\right] )+g(X,\left[ Z,Y%
\right] ) \\
&=&-(\varrho _{A}\circ X)(g(Y,Z))+g(\left[ X,Y\right] ,Z)+g(Y,\left[ X,Z%
\right] ) \\
&&+g(X,\left[ Y,Z\right] +[Z,Y]) \\
&=&-(\mathcal{L}_{X}^{a}g)(Y,Z)+0.
\end{eqnarray*}
\end{proof}

Theorem \ref{Theorem_Comptibility_by_Lie_der} now yields

\begin{corollary}
If $\nabla $ is an $A$-connection in the bundle $A$ with totally
skew-symmetric torsion defined, for $X,Y\in \Gamma (A)$, by%
\begin{equation*}
\nabla _{X}Y={\textstyle {\frac{1 }{2}}}\left( \left[ X,Y\right]
+\left\langle X:Y\right\rangle ^{s}\right) +{\textstyle {\frac{1 }{2}}}%
T(X,Y),
\end{equation*}%
where $\left\langle X:Y\right\rangle ^{s}$ is given in \emph{(\ref%
{formula_for_symetric_bracket})}, then $\nabla $ is compatible with the
metric $g$.
\end{corollary}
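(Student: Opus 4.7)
The plan is to observe that this corollary is a direct synthesis of two results established immediately before it: Theorem \ref{Theorem_Comptibility_by_Lie_der} (and its corollary) together with Theorem \ref{Theorem_Sym_Bracket_by_Lie_der}. No new computation should be required; the assertion follows essentially by substitution.

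First I would invoke the corollary following Theorem \ref{Theorem_Comptibility_by_Lie_der}, which reduces metric compatibility of any connection of the shape
\begin{equation*}
\nabla_{X}Y=\tfrac{1}{2}\bigl([X,Y]+\langle X:Y\rangle\bigr)+\tfrac{1}{2}T(X,Y)
\end{equation*}
to the single pointwise identity $\mathcal{L}_X^{a}g=-\mathcal{L}_X^{s}g$ for every $X\in\Gamma(A)$, where $\mathcal{L}^{s}$ is the symmetric Lie derivative built from the chosen symmetric bracket $\langle\cdot{:}\cdot\rangle$. This reduction is exactly what makes the proof compact, so it is the first move.

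Next, I would specialize the symmetric bracket to $\langle\cdot{:}\cdot\rangle^{s}$ defined by (\ref{formula_for_symetric_bracket}), and appeal to Theorem \ref{Theorem_Sym_Bracket_by_Lie_der}. That theorem guarantees two things simultaneously: (i) $\langle\cdot{:}\cdot\rangle^{s}$ is genuinely a symmetric bracket (so the hypotheses of Theorem \ref{Theorem_Comptibility_by_Lie_der} are met and $\mathcal{L}^{s}$ is well defined from it), and (ii) the resulting symmetric Lie derivative satisfies precisely $\mathcal{L}_X^{s}g=-\mathcal{L}_X^{a}g$ for every $X\in\Gamma(A)$. Combined with the reduction of the previous step, this is exactly the compatibility criterion, so $\nabla g=0$ follows.

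There is no substantive obstacle. The only thing worth double-checking is a bookkeeping point: the symmetric Lie derivative $\mathcal{L}^{s}$ appearing in Theorem \ref{Theorem_Comptibility_by_Lie_der} must be the one induced by the \emph{same} symmetric bracket used in the definition of $\nabla$. Since we are instantiating both with $\langle\cdot{:}\cdot\rangle^{s}$, this matches automatically, and the conclusion $\nabla g=0$ drops out in one line.
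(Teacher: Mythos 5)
Your proposal is correct and follows exactly the route the paper intends: the text introduces this corollary with ``Theorem \ref{Theorem_Comptibility_by_Lie_der} now yields,'' i.e.\ combine the metric-compatibility criterion $\mathcal{L}_X^{a}g=-\mathcal{L}_X^{s}g$ from that theorem's corollary with Theorem \ref{Theorem_Sym_Bracket_by_Lie_der}, which shows the bracket $\left\langle \cdot :\cdot \right\rangle ^{s}$ satisfies precisely that identity. Your bookkeeping remark that $\mathcal{L}^{s}$ must be induced by the same bracket used to define $\nabla$ is the right thing to check and holds here.
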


In particular, we can write the Levi-Civita connection explicitly:

\begin{corollary}
\label{Corollary_L_C_connection_form}The Levi-Civita connection with respect
to $g$ is given by%
\begin{equation*}
\nabla _{X}Y={\textstyle{\frac{1}{2}}}\left( \left[ X,Y\right] +\left\langle
X:Y\right\rangle ^{s}\right) ,
\end{equation*}%
where%
\begin{equation}
\left\langle X:Y\right\rangle ^{s}=\sharp (\mathcal{L}_{X}^{a}Y^{\flat }+%
\mathcal{L}_{Y}^{a}X^{\flat }-d^{a}(g(X,Y))  \label{Bracket_Sym_1}
\end{equation}%
for $X,Y\in \Gamma (A)$.
\end{corollary}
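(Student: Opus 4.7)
The plan is to read off this corollary as the specialization of the preceding corollary to the case $T = 0$. Since the Levi-Civita connection is characterized as the unique $A$-connection in $A$ that is both torsion-free and metric-compatible with $g$, it suffices to verify three things about the right-hand side of the claimed formula: (i) it genuinely defines an $A$-connection, (ii) it is torsion-free, and (iii) it is compatible with $g$. Once these are in hand, the uniqueness of the Levi-Civita connection identifies it with the stated formula.

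For (i), I would check $C^\infty(M)$-linearity in $X$ and the Leibniz rule in $Y$ for $\nabla_X Y := \tfrac{1}{2}([X,Y] + \langle X:Y\rangle^s)$. Using the identity $[fX,Y] = f[X,Y] - (\varrho_A \circ Y)(f)\,X$, which follows from skew-symmetry of $[\cdot,\cdot]$ and its Leibniz rule, together with $\langle fX : Y\rangle^s = f\,\langle X:Y\rangle^s + (\varrho_A \circ Y)(f)\,X$, which is the symmetry of $\langle\cdot:\cdot\rangle^s$ combined with the Leibniz-kind rule established in Theorem~\ref{Theorem_Sym_Bracket_by_Lie_der}, the two anchor terms cancel and one gets $\nabla_{fX}Y = f \nabla_X Y$. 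The Leibniz rule $\nabla_X(fY) = f\nabla_X Y + (\varrho_A\circ X)(f)\,Y$ comes out just as directly from the Leibniz rules satisfied by $[\cdot,\cdot]$ and $\langle\cdot:\cdot\rangle^s$ in the second slot.

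For (ii), the symmetry $\langle X:Y\rangle^s = \langle Y:X\rangle^s$ and the antisymmetry $[Y,X] = -[X,Y]$ give
\[
\nabla_X Y - \nabla_Y X - [X,Y] = \tfrac{1}{2}\bigl([X,Y] - [Y,X]\bigr) + \tfrac{1}{2}\bigl(\langle X:Y\rangle^s - \langle Y:X\rangle^s\bigr) - [X,Y] = 0,
\]
so $T^\nabla = 0$, which is in particular totally skew-symmetric. For (iii), $\nabla$ then falls under the hypothesis of the previous corollary with the choice $T \equiv 0$, and that corollary supplies $\nabla g = 0$ at once; alternatively, one can invoke Theorem~\ref{Theorem_Comptibility_by_Lie_der} together with the identity $\mathcal{L}_X^s g = -\mathcal{L}_X^a g$ proved in Theorem~\ref{Theorem_Sym_Bracket_by_Lie_der}, which was the whole point of constructing $\langle\cdot:\cdot\rangle^s$. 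Finally, invoking the uniqueness of the torsion-free metric $A$-connection concludes the proof.

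There is no real obstacle here since all the hard work was done in Theorems~\ref{Theorem_Comptibility_by_Lie_der} and \ref{Theorem_Sym_Bracket_by_Lie_der}; the only care needed is in the $C^\infty$-linearity check in (i), where the anchor-derivation terms from $[\cdot,\cdot]$ and $\langle\cdot:\cdot\rangle^s$ must exactly cancel, and this cancellation is precisely what makes the symmetric bracket $\langle\cdot:\cdot\rangle^s$ built from $\mathcal{L}^a$ and $d^a$ a legitimate symmetric bracket rather than merely an $\mathbb{R}$-bilinear map.
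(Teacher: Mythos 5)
Your proposal is correct and follows essentially the same route as the paper: the corollary is obtained by setting $T\equiv 0$ in the preceding corollary (which rests on Theorem~\ref{Theorem_Comptibility_by_Lie_der} and Theorem~\ref{Theorem_Sym_Bracket_by_Lie_der}), checking that the resulting connection is torsion-free, and invoking the stated uniqueness of the torsion-free metric $A$-connection. Your explicit verification of $C^\infty(M)$-linearity in the first slot is a detail the paper leaves implicit, but it is accurate and harmless.
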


\begin{theorem}
The mapping $\left\{ \cdot ,\cdot \right\} ^{s}:\Gamma (A)\times \Gamma
(A)\rightarrow \Gamma (A)$ defined by%
\begin{equation}
\left\{ X,Y\right\} ^{s}=\sharp (\mathcal{L}_{X}^{s}Y^{\flat }+\mathcal{L}%
_{Y}^{s}X^{\flat }+d^{s}(g(X,Y))  \label{Bracket_Sym_2}
\end{equation}%
for $X,Y\in \Gamma (A)$ is a symmetric bracket in the skew-symmetric
algebroid $\left( A,\varrho _{A},[\cdot ,\cdot ]\right) $.
\end{theorem}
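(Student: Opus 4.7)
The plan is to verify the three defining properties of a symmetric bracket in turn: $\mathbb{R}$-bilinearity, symmetry, and the Leibniz-kind rule in the second slot. $\mathbb{R}$-bilinearity is automatic, since $\mathcal{L}^{s}$ is $\mathbb{R}$-bilinear in its subscript and its argument, $d^{s}$ is $\mathbb{R}$-linear, and the musical isomorphisms $\flat ,\sharp $ are $\mathbb{R}$-linear. Symmetry is even more immediate: the expression $\mathcal{L}_{X}^{s}Y^{\flat }+\mathcal{L}_{Y}^{s}X^{\flat }$ is manifestly invariant under $X\leftrightarrow Y$, while $d^{s}(g(X,Y))$ is invariant because $g$ is symmetric. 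Therefore $\left\{ X,Y\right\} ^{s}=\left\{ Y,X\right\} ^{s}$.

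The main computation is the Leibniz rule $\left\{ X,fY\right\} ^{s}=f\left\{ X,Y\right\} ^{s}+(\varrho _{A}\circ X)(f)\cdot Y$ for $f\in C^{\infty }(M)$. The strategy is to expand the three summands of $\left\{ X,fY\right\} ^{s}$ separately and observe that the anomalous $g(X,Y)\cdot d^{s}f$ contributions cancel. Using $(fY)^{\flat }=f\cdot Y^{\flat }$, Lemma \ref{Lemma_Properties_of_Lie_Sym}(b) gives
\[
\mathcal{L}_{X}^{s}(fY)^{\flat }=f\cdot \mathcal{L}_{X}^{s}Y^{\flat }+(\varrho _{A}\circ X)(f)\cdot Y^{\flat }.
\]
Lemma \ref{Lemma_Properties_of_Lie_Sym}(a), together with $i_{Y}X^{\flat }=X^{\flat }(Y)=g(X,Y)$, gives
\[
\mathcal{L}_{fY}^{s}X^{\flat }=f\cdot \mathcal{L}_{Y}^{s}X^{\flat }-g(X,Y)\cdot d^{s}f.
\]
Finally, since on functions $d^{s}\phi $ is the derivation $(d^{s}\phi )(Z)=(\varrho _{A}\circ Z)(\phi )$, the ordinary Leibniz rule for $d^{s}$ on the product of two functions yields
\[
d^{s}(g(X,fY))=d^{s}(f\cdot g(X,Y))=f\cdot d^{s}(g(X,Y))+g(X,Y)\cdot d^{s}f.
\]
Adding these three identities, the two $g(X,Y)\cdot d^{s}f$ contributions cancel (this is precisely where the sign in front of $d^{s}(g(X,Y))$ in (\ref{Bracket_Sym_2}) matters — it is opposite to the sign in (\ref{Bracket_Sym_1}) because Lemma \ref{Lemma_Properties_of_Lie_Sym}(a) differs in sign from Lemma \ref{Lemma_Skew_symm_Lie_derivative}(a)). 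Applying the $C^{\infty }(M)$-linear map $\sharp $ and using $\sharp (Y^{\flat })=Y$ then produces the desired Leibniz rule.

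Thus the only real work is that one cancellation, and it is forced by the definitions; there is no serious obstacle. In summary: symmetry and $\mathbb{R}$-bilinearity are visible from the formula, while the Leibniz rule reduces, via Lemma \ref{Lemma_Properties_of_Lie_Sym} and the derivation property of $d^{s}$ on functions, to the identity
\[
-g(X,Y)\cdot d^{s}f+g(X,Y)\cdot d^{s}f=0,
\]
after which $\sharp $ converts the remaining identity into the claim.
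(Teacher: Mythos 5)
Your proposal is correct and follows essentially the same route as the paper: symmetry and $\mathbb{R}$-bilinearity read off from the formula, then the Leibniz rule verified via Lemma \ref{Lemma_Properties_of_Lie_Sym} and the derivation property of $d^{s}$ on functions, with the two $g(X,Y)\cdot d^{s}f$ terms cancelling before applying $\sharp$. Your remark about why the sign in front of $d^{s}(g(X,Y))$ must be opposite to that in (\ref{Bracket_Sym_1}) is a nice touch the paper leaves implicit.
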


\begin{proof}
It is obvious that $\left\{ \cdot ,\cdot \right\} ^{s}$ is a symmetric
bilinear mapping over $\mathbb{R}$. Let $X,Y\in \Gamma (A)$ and $f\in
C^{\infty }(M)$. On account of the properties of the symmetric Lie
derivatives written in Lemma \ref{Lemma_Properties_of_Lie_Sym},%
\begin{equation*}
\mathcal{L}_{X}^{s}(fY)^{\flat }=f\mathcal{L}_{X}^{s}(Y^{\flat })+(\varrho
_{A}\circ X)(f)Y^{\flat }
\end{equation*}%
and%
\begin{equation*}
\mathcal{L}_{fY}^{s}X^{\flat }=f\mathcal{L}_{Y}^{s}X^{\flat }-(i_{Y}X^{\flat
})d^{s}f=f\mathcal{L}_{Y}^{s}X^{\flat }-g(X,Y)d^{s}f.
\end{equation*}%
Furthermore, since%
\begin{equation*}
d^{s}(g(X,fY))=fd^{s}(g(X,Y))+g(X,Y)d^{s}f,
\end{equation*}%
we immediately conclude that%
\begin{equation*}
\left\{ X,fY\right\} ^{s}=f\left\{ X,Y\right\} ^{s}+\sharp ((\varrho
_{A}\circ X)(f)Y^{\flat })=f\left\{ X,Y\right\} +(\varrho _{A}\circ X)(f)Y,
\end{equation*}%
and thus, the proof is complete.
\end{proof}

To compare the symmetric brackets $\left\langle \cdot :\cdot \right\rangle
^{s}$ and $\left\{ \cdot ,\cdot \right\} ^{s}$ given in (\ref{Bracket_Sym_1}%
) and (\ref{Bracket_Sym_2}), respectively, we note that $\left\langle \cdot
:\cdot \right\rangle ^{s}$ is a symmetric product induced by the Levi-Civita
connection, and then, for any $X,Y\in \Gamma (A)$, we have%
\begin{eqnarray*}
\mathcal{L}_{X}^{s}Y^{\flat } &=&\mathcal{L}_{X}^{s}i_{Y}g=i_{Y}\mathcal{L}%
_{X}^{s}g+i_{\left\langle X:Y\right\rangle }g=-i_{Y}\mathcal{L}%
_{X}^{a}g+i_{\left\langle X:Y\right\rangle }g \\
&=&-\mathcal{L}_{X}^{a}Y^{\flat }+i_{\left[ X,Y\right] }g+i_{\left\langle
X:Y\right\rangle }g
\end{eqnarray*}%
since Theorem \ref{Theorem_Comptibility_by_Lie_der} and the Cartan
identities for Lie derivatives given in lemmas \ref{Alternating_Cartan} and %
\ref{Symmetric_Cartan} hold. It follows that%
\begin{equation*}
\left\{ X,Y\right\} ^{s}=2\left\langle X:Y\right\rangle -\left\langle
X:Y\right\rangle ^{s}
\end{equation*}%
for $X,Y\in \Gamma (A)$. Note that there is a more general property saying
that the affine sum of symmetric brackets is again a symmetric bracket.

\section{Symmetric brackets on almost Hermitian manifolds}

In this section we consider the symmetric brackets induced by the structures
of almost Hermitian manifolds. Let $\left( M,g,J\right) $ be an almost
Hermitian manifold, i.e., $\left( M,g\right) $ is a $2n$-dimensional
Riemannian manifold admitting an orthogonal almost complex structure $%
J:TM\rightarrow TM$. Associated to the structures $g$ and $J$ are the \emph{K%
\"{a}hler form} $\Omega \in \Gamma (\mathop{\textstyle \bigwedge }\nolimits%
^{2}T^{\ast }M)$ given by 
\begin{equation*}
\Omega (X,Y)=g(JX,Y)
\end{equation*}%
for $X,Y\in \Gamma (TM)$ and the \emph{Nijenhuis tensor} $N_{J}\in \Gamma (%
\mathop{\textstyle \bigwedge }\nolimits^{2}T^{\ast }M\otimes TM)$ of $J$,
which is defined by%
\begin{equation*}
N_{J}(X,Y)=J\left[ JX,Y\right] +J\left[ X,JY\right] +\left[ X,Y\right] -%
\left[ JX,JY\right] 
\end{equation*}%
for $X,Y\in \Gamma (TM)$.

Kosmann-Schwarzbach and Magri introduced in \cite%
{Kosmann-Schwarzbach-Magri1990} (cf. also \cite{Grabowski-Urbanski1997}) the
bracket $[\![\cdot ,\cdot ]\!]^{J}$ on $TM$ defined by 
\begin{equation}
\lbrack \![X,Y]\!]^{J}=\left[ JX,Y\right] +\left[ X,JY\right] -J\left[ X,Y%
\right] .  \label{Nawias_skew_J}
\end{equation}%
One can observe that for any $X,Y\in \Gamma (TM)$ we have%
\begin{equation*}
N_{J}(X,Y)=J[\![X,Y]\!]^{J}-\left[ JX,JY\right] .
\end{equation*}%
Since%
\begin{equation*}
\lbrack \![X,fY]\!]^{J}=f[\![X,Y]\!]^{J}+(JX)(f)Y
\end{equation*}%
for $X,Y\in \Gamma (TM)$ and $f\in C^{\infty }(M)$, the tangent bundle
together with the almost complex structure $J$ as an anchor and the mapping $%
[\![\cdot ,\cdot ]\!]^{J}$ given in (\ref{Nawias_skew_J}) as a
skew-symmetric bracket is a skew-symmetric algebroid, which we denote by $%
TM^{J}$. It is obvious that if $N_{J}=0$, then $[\![[\![X,Y]\!]^{J},Z]%
\!]^{J}=-J\left[ \left[ JX,JY\right] ,JZ\right] $ for $X,Y,Z\in \Gamma (TM)$
and so $\mathop{\rm Jac}\nolimits_{[\![\cdot ,\cdot ]\!]^{J}}(X,Y,Z)=-J%
\mathop{\rm Jac}\nolimits_{[\cdot ,\cdot ]}(JX,JY,JZ)=0$ for $X,Y,Z\in
\Gamma (TM)$. In consequence, if the almost complex structure $J$ is
integrable, then the skew-symmetric algebroid $\left( TM,J,[\![\cdot ,\cdot
]\!]^{J}\right) $ is a Lie algebroid over $M$.

Let $d^{a}$ and $\mathcal{L}^{a}$ be the exterior derivative operator and
the (alternating) Lie derivative for the Lie algebroid $\left( TM,%
\mathop{\rm Id}\nolimits_{TM},\left[ \cdot ,\cdot \right] \right) $,
respectively, where $\left[ \cdot ,\cdot \right] $ is the Lie bracket of
vector fields on $M$. However, let $d^{J}$ and $\mathcal{L}^{J}$ be the
exterior derivative operator and the Lie derivative for the skew-symmetric
algebroid $\left( TM,J,[\![\cdot ,\cdot ]\!]^{J}\right) $, respectively.

Using the form of symmetric brackets induced by connections with totally
skew-symmetric torsion, in particular by the Levi-Civita connections
(Corollary \ref{Corollary_L_C_connection_form}), we compare symmetric
brackets induced by the Levi-Civita connections in skew-symmetric algebroids 
$TM$ and $TM^{J}$, obtaining the following theorem.

\begin{theorem}
Let $\nabla ^{g}$ be the Levi-Civita connection in the Lie algebroid $\left(
TM,\mathop{\rm Id}\nolimits_{TM},\left[ \cdot ,\cdot \right] \right) $ and
let $\nabla ^{J,g}$ be the Levi-Civita connection in the skew-symmetric
algebroid $\left( TM,J,[\![\cdot ,\cdot ]\!]^{J}\right) $, both metric with
respect to $g$. If $\left\langle X:Y\right\rangle =\nabla _{X}^{g}Y+\nabla
_{Y}^{g}X$ and $\left\langle X:Y\right\rangle ^{J}=\nabla _{X}^{J,g}Y+\nabla
_{Y}^{J,g}X $ for $X,Y\in \Gamma (TM)$, then 
\begin{equation}
\left\langle X:Y\right\rangle ^{J}=\left\langle JX:Y\right\rangle
+\left\langle X:JY\right\rangle +\sharp \hspace{-0.1cm}\left( \left\langle
X:Y\right\rangle ^{\flat }\circ J\right) .  \label{Formula_Sym_bracket_forJ}
\end{equation}
\end{theorem}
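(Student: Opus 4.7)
My plan is to apply Corollary~\ref{Corollary_L_C_connection_form} to both algebroid structures, express each side of (\ref{Formula_Sym_bracket_forJ}) as a pairing against an arbitrary $Z \in \Gamma(TM)$, and verify the identity term by term using the orthogonality of $J$ (equivalently, $g(JA,B)=-g(A,JB)$). Since $\sharp$ is an isomorphism dual to $\flat$, it suffices to check that
\[
g(\langle X:Y\rangle^{J},Z) \;=\; g(\langle JX:Y\rangle,Z) + g(\langle X:JY\rangle,Z) + g(\langle X:Y\rangle, JZ)
\]
for all $X,Y,Z\in\Gamma(TM)$.

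For the left-hand side, Corollary~\ref{Corollary_L_C_connection_form} applied to the skew-symmetric algebroid $(TM,J,[\![\cdot,\cdot]\!]^{J})$ yields
\[
\langle X:Y\rangle^{J} = \sharp\bigl(\mathcal{L}^{J}_{X}Y^{\flat} + \mathcal{L}^{J}_{Y}X^{\flat} - d^{J}(g(X,Y))\bigr).
\]
First I would expand each piece using the anchor $J$ and the bracket $[\![X,Z]\!]^{J} = [JX,Z] + [X,JZ] - J[X,Z]$. Concretely,
\[
(\mathcal{L}^{J}_{X}Y^{\flat})(Z) = (JX)(g(Y,Z)) - g(Y,[JX,Z]) - g(Y,[X,JZ]) + g(Y,J[X,Z]),
\]
and a symmetric expression for $(\mathcal{L}^{J}_{Y}X^{\flat})(Z)$, while $d^{J}(g(X,Y))(Z) = (JZ)(g(X,Y))$. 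The orthogonality identity then lets me rewrite $g(Y,J[X,Z]) = -g(JY,[X,Z])$ and $g(X,J[Y,Z]) = -g(JX,[Y,Z])$, producing a sum of nine scalar terms involving only the ordinary Lie bracket and $J$.

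For the right-hand side, I would apply Corollary~\ref{Corollary_L_C_connection_form} three times (to $\langle JX:Y\rangle$, $\langle X:JY\rangle$, and $\langle X:Y\rangle$, the last paired against $JZ$), obtaining fifteen scalar terms. The key simplification is that the ``derivative'' terms cancel in pairs thanks to orthogonality: $X(g(JY,Z)+g(Y,JZ))=0$, $Y(g(JX,Z)+g(X,JZ))=0$, and $Z(g(JX,Y)+g(X,JY))=0$. After these cancellations, exactly nine bracket terms and three derivative terms survive, and they match the nine terms obtained for $g(\langle X:Y\rangle^{J},Z)$ on the nose.

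The calculation is entirely mechanical; the only real obstacle is bookkeeping — keeping track of signs and ensuring every cancellation is accounted for. Provided the orthogonality relation is applied consistently to move $J$ between the two slots of $g$ whenever it appears inside a bracket, the two expressions collapse to the same ten-term combination and the identity follows.
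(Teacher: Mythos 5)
Your plan is correct and follows essentially the same route as the paper: both apply Corollary~\ref{Corollary_L_C_connection_form} to the two algebroid structures and match terms using the orthogonality relation $g(JA,B)=-g(A,JB)$, the paper merely packaging the expansion into the $1$-form identity $\mathcal{L}_{X}^{J}Y^{\flat }=\mathcal{L}_{JX}^{a}Y^{\flat }+\mathcal{L}_{X}^{a}(JY)^{\flat }+(\mathcal{L}_{X}^{a}Y^{\flat })\circ J$ together with $d^{J}(g(X,Y))=d^{a}(g(X,Y))\circ J$ and $d^{a}(g(JX,Y))+d^{a}(g(X,JY))=0$, rather than pairing everything against $Z$. (Your term counts are slightly off --- six bracket terms and three derivative terms survive on each side --- but this does not affect the argument.)
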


\begin{proof}
Let $X,Y\in \Gamma (TM)$. Corollary \ref{Corollary_L_C_connection_form} now
yields%
\begin{equation*}
\left\langle X:Y\right\rangle ^{J}=\sharp (\mathcal{L}_{X}^{J}Y^{\flat }+%
\mathcal{L}_{Y}^{J}X^{\flat }-d^{J}(g(X,Y)).
\end{equation*}%
One can check that%
\begin{equation*}
\mathcal{L}_{X}^{J}Y^{\flat }=\mathcal{L}_{JX}^{a}Y^{\flat }+\mathcal{L}%
_{X}^{a}(JY)^{\flat }+(\mathcal{L}_{X}^{a}Y^{\flat })\circ J.
\end{equation*}%
Moreover, since%
\begin{equation*}
d^{J}(g(X,Y))=d^{a}(g(X,Y))\circ J
\end{equation*}%
and%
\begin{equation*}
d^{a}(g(JX,Y))+d^{a}(g(X,JY))=0,
\end{equation*}%
we have (\ref{Formula_Sym_bracket_forJ}).
\end{proof}

Now, we define some symmetric brackets on almost Hermitian manifolds.

Let $\left( TM,\rho ,\left[ \cdot ,\cdot \right] ^{\rho }\right) $ be a
structure of skew-symmetric algebroid, and let $\left\langle \cdot :\cdot
\right\rangle ^{\rho }$ be a symmetric bracket in this algebroid. By
definition, 
\begin{equation*}
\left\langle X:fY\right\rangle ^{\rho }=f\left\langle X:Y\right\rangle
^{\rho }+(\rho \circ X)(f)Y
\end{equation*}%
for $X,Y\in \Gamma (TM)$.

We define two $\mathbb{R} $-bilinear symmetric operators $P^{^{\rho
}},Q^{^{\rho }}:\Gamma (TM)\times \Gamma (TM)\rightarrow \Gamma (TM)$,%
\begin{equation*}
P^{^{\rho }}(X,Y)=-J\left( \left[ X,JY\right] ^{\rho }+\left[ Y,JX\right]
^{\rho }\right)
\end{equation*}%
and%
\begin{equation*}
Q^{^{\rho }}(X,Y)=-J\left( \left\langle X:JY\right\rangle ^{\rho
}+\left\langle Y:JX\right\rangle ^{\rho }\right)
\end{equation*}%
for $X,Y\in \Gamma (TM)$.

\begin{lemma}
\label{Properties_of_P_and_Q}For any $X,Y\in \Gamma (TM)$, $f\in C^{\infty
}(M)$, we have

\begin{itemize}
\item[(a)] $P^{^{\rho }}(X,f\cdot Y)=f\cdot P^{^{\rho }}(X,Y)+(\rho \circ
X)(f)\cdot Y+(\rho \circ JX)(f)\cdot JY$ and

\item[(b)] $Q^{^{\rho }}(X,f\cdot Y)=f\cdot Q^{^{\rho }}(X,Y)+(\rho \circ
X)(f)\cdot Y-(\rho \circ JX)(f)\cdot JY.$
\end{itemize}
\end{lemma}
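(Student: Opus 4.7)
The plan is to prove both identities by direct computation from the definitions, using only the Leibniz-kind rule for the relevant bracket together with the identity $J^{2}=-\mathop{\rm Id}\nolimits_{TM}$ and the $C^{\infty}(M)$-linearity of $J$. The only point that deserves care is that the sign difference between (a) and (b) must emerge cleanly from a single structural feature: the bracket in the definition of $P^{\rho}$ is skew-symmetric, whereas the bracket in $Q^{\rho}$ is symmetric.

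For (a), I would start by writing
\begin{equation*}
P^{\rho}(X,fY) = -J\bigl([X,f\cdot JY]^{\rho} + [fY,JX]^{\rho}\bigr),
\end{equation*}
using the linearity $J(fY)=f\cdot JY$. The first bracket expands by the Leibniz rule for $[\cdot,\cdot]^{\rho}$ into $f[X,JY]^{\rho} + (\rho\circ X)(f)\cdot JY$. For the second, I first apply skew-symmetry $[fY,JX]^{\rho}=-[JX,fY]^{\rho}$ and then the Leibniz rule, producing $f[Y,JX]^{\rho} - (\rho\circ JX)(f)\cdot Y$. Multiplying by $-J$ and using $J^{2}=-\mathop{\rm Id}$, the term $-J\bigl((\rho\circ X)(f)\cdot JY\bigr)$ contributes $+(\rho\circ X)(f)\cdot Y$, while $-J\bigl(-(\rho\circ JX)(f)\cdot Y\bigr) = +(\rho\circ JX)(f)\cdot JY$, so the combination is exactly the right-hand side of (a).

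For (b), the computation runs in parallel, but the relevant Leibniz rule is that of the symmetric bracket $\langle\cdot :\cdot\rangle^{\rho}$ stated in Section 3. The essential difference occurs in the second term: symmetry gives $\langle fY:JX\rangle^{\rho} = \langle JX:fY\rangle^{\rho} = f\langle Y:JX\rangle^{\rho} + (\rho\circ JX)(f)\cdot Y$, \emph{without} the minus sign that appeared in (a). After multiplication by $-J$, this yields $-(\rho\circ JX)(f)\cdot JY$, accounting for the flipped sign in the last term of (b).

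There is no real obstacle here; the calculation is routine bookkeeping. The one thing worth flagging in the write-up is the precise origin of the sign flip: it arises solely from the skew-symmetry of $[\cdot,\cdot]^{\rho}$ versus the symmetry of $\langle\cdot :\cdot\rangle^{\rho}$ when one transposes arguments to apply the Leibniz rule. Everything else, in particular the conversion of $J^{2}Y$ into $-Y$, is common to both parts.
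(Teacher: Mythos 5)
Your proof is correct and follows exactly the same direct computation as the paper: expand both terms by the Leibniz rule (using skew-symmetry of $[\cdot,\cdot]^{\rho}$ versus symmetry of $\langle\cdot:\cdot\rangle^{\rho}$ to transpose arguments), then apply $-J$ and $J^{2}=-\mathop{\rm Id}\nolimits_{TM}$. Your explicit remark about the origin of the sign flip is a nice touch, but the argument itself is the paper's.
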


\begin{proof}
Compute directly,%
\begin{eqnarray*}
P^{^{\rho }}(X,f\cdot Y) &=&-J\left( \left[ X,f\cdot JY\right] ^{\rho }+%
\left[ f\cdot Y,JX\right] ^{\rho }\right) \\
&=&-J\left( f\cdot \left[ X,JY\right] ^{\rho }+(\rho \circ X)(f)\cdot
JY+f\cdot \left[ Y,JX\right] ^{\rho }-(\rho \circ JX)(f)\cdot Y\right) \\
&=&f\cdot P^{^{\rho }}(X,Y)+(\rho \circ X)(f)\cdot Y+(\rho \circ JX)(f)\cdot
JY
\end{eqnarray*}%
and 
\begin{eqnarray*}
Q^{^{\rho }}(X,f\cdot Y) &=&-J\left( \left\langle X:f\cdot JY\right\rangle
^{\rho }+\left\langle f\cdot Y:JX\right\rangle ^{\rho }\right) \\
&=&-J\left( f\cdot \left\langle X:JY\right\rangle ^{\rho }+(\rho \circ
X)(f)\cdot JY+f\cdot \left\langle Y:JX\right\rangle ^{\rho }+(\rho \circ
JX)(f)\cdot Y\right) \\
&=&f\cdot Q^{^{\rho }}(X,Y)+(\rho \circ X)(f)\cdot Y-(\rho \circ JX)(f)\cdot
JY.
\end{eqnarray*}
\end{proof}

In consequence of Lemma \ref{Properties_of_P_and_Q}, we immediately get the
following results.

\begin{theorem}
\label{Theorem_P_and_Q}The mapping%
\begin{equation*}
{\textstyle {\frac{1 }{2}}}(P^{^{\rho }}+Q^{^{\rho }})
\end{equation*}%
is a symmetric bracket in the skew-symmetric algebroid $\left( TM,\rho ,%
\left[ \cdot ,\cdot \right] ^{\rho }\right) $.
\end{theorem}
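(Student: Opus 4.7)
The plan is to verify the three defining properties of a symmetric bracket for the mapping $\frac{1}{2}(P^{\rho}+Q^{\rho})$: $\mathbb{R}$-bilinearity, symmetry, and the Leibniz-kind rule with respect to the anchor $\rho$. The first two properties should be essentially automatic from the construction of $P^{\rho}$ and $Q^{\rho}$, so the real content will lie in showing that the Leibniz defect terms in part (a) of Lemma~\ref{Properties_of_P_and_Q} exactly cancel the Leibniz defect terms in part (b).

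First I would observe that $P^{\rho}$ and $Q^{\rho}$ were introduced as $\mathbb{R}$-bilinear symmetric operators, which gives both the $\mathbb{R}$-bilinearity and the symmetry of $\frac{1}{2}(P^{\rho}+Q^{\rho})$ for free. Note that the symmetry of $P^{\rho}$ follows from the skew-symmetry of $[\cdot,\cdot]^{\rho}$ together with the explicit symmetrization in the definition $P^{\rho}(X,Y)=-J([X,JY]^{\rho}+[Y,JX]^{\rho})$, and similarly for $Q^{\rho}$.

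The key step is the Leibniz rule. Applying part (a) of Lemma~\ref{Properties_of_P_and_Q} to $P^{\rho}(X,f\cdot Y)$ gives an ``extra'' term $(\rho\circ JX)(f)\cdot JY$, while applying part (b) to $Q^{\rho}(X,f\cdot Y)$ produces the term $-(\rho\circ JX)(f)\cdot JY$. Adding the two identities, these $\pm(\rho\circ JX)(f)\cdot JY$ contributions cancel and the $(\rho\circ X)(f)\cdot Y$ contributions double up, so that
\begin{equation*}
\tfrac{1}{2}(P^{\rho}+Q^{\rho})(X,f\cdot Y)=f\cdot \tfrac{1}{2}(P^{\rho}+Q^{\rho})(X,Y)+(\rho\circ X)(f)\cdot Y,
\end{equation*}
which is exactly the required Leibniz rule with anchor $\rho$. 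No further computation is needed; the argument is entirely algebraic and relies only on the already-established lemma.

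The only mildly delicate point, if one wishes to be rigorous, is to remark that the Leibniz rule in the second slot together with symmetry implies the Leibniz rule in the first slot, so it suffices to verify the derivation property only on the right. I do not anticipate any genuine obstacle: the theorem is essentially the statement that averaging $P^{\rho}$ and $Q^{\rho}$ kills the unwanted $J$-twisted defect and leaves behind precisely the anchored derivation required by the definition of a symmetric bracket.
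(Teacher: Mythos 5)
Your proposal is correct and follows exactly the paper's route: the paper derives the theorem as an immediate consequence of Lemma \ref{Properties_of_P_and_Q}, with the $\pm(\rho\circ JX)(f)\cdot JY$ defects cancelling and the $(\rho\circ X)(f)\cdot Y$ terms doubling, just as you describe. No gaps.
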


\begin{corollary}
\label{Lemma_formula_for_sym_bracketPQ}The mapping $\left\langle \cdot
:\cdot \right\rangle :\Gamma (TM)\times \Gamma (TM)\rightarrow \Gamma (TM)$
given by%
\begin{equation*}
\left\langle X:Y\right\rangle =-{\textstyle{\frac{1}{2}}}J\left( \left[ X,JY%
\right] +\left[ Y,JX\right] +\sharp (\mathcal{L}_{X}^{a}(JY)^{\flat }+%
\mathcal{L}_{Y}^{a}(JX)^{\flat }+\mathcal{L}_{JX}^{a}Y^{\flat }+\mathcal{L}%
_{JY}^{a}X^{\flat })\right)
\end{equation*}%
is a symmetric bracket in the Lie algebroid $\left( TM,\mathop{\rm Id}%
\nolimits_{TM},\left[ \cdot ,\cdot \right] \right) $, where $\left[ \cdot
,\cdot \right] $ is the Lie bracket of vector fields on $M$ and $\mathcal{L}%
^{a}$ is the Lie derivative on $M$.
\end{corollary}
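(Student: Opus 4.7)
The plan is to derive this corollary directly from Theorem~\ref{Theorem_P_and_Q} by specializing $\rho=\mathop{\rm Id}\nolimits_{TM}$, $[\cdot,\cdot]^{\rho}=[\cdot,\cdot]$ (the Lie bracket of vector fields), and taking the input symmetric bracket $\langle\cdot:\cdot\rangle^{\rho}$ to be the symmetric product $\langle\cdot:\cdot\rangle^{s}$ induced by the Levi-Civita connection of $g$, whose explicit expression is supplied by Corollary~\ref{Corollary_L_C_connection_form}. Since $(TM,\mathop{\rm Id}\nolimits_{TM},[\cdot,\cdot])$ is a Lie algebroid and $g$ is a pseudo-Riemannian metric, such a Levi-Civita bracket exists and satisfies the Leibniz rule; therefore the operators $P^{\rho}$ and $Q^{\rho}$ defined before Lemma~\ref{Properties_of_P_and_Q} are well-defined, and Theorem~\ref{Theorem_P_and_Q} guarantees that $\tfrac{1}{2}(P^{\rho}+Q^{\rho})$ is a symmetric bracket. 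It remains only to unfold this expression and check that it equals the claimed one.

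First I would write out $P^{\rho}(X,Y)=-J([X,JY]+[Y,JX])$ directly from the definition. Next I would substitute the formula of Corollary~\ref{Corollary_L_C_connection_form} into
\[
Q^{\rho}(X,Y)=-J\bigl(\langle X:JY\rangle^{s}+\langle Y:JX\rangle^{s}\bigr),
\]
producing four Lie-derivative terms ($\mathcal{L}_{X}^{a}(JY)^{\flat}$, $\mathcal{L}_{JY}^{a}X^{\flat}$, $\mathcal{L}_{Y}^{a}(JX)^{\flat}$, $\mathcal{L}_{JX}^{a}Y^{\flat}$) under $-J\sharp$, together with two residual terms $d^{a}(g(X,JY))$ and $d^{a}(g(Y,JX))$.

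The key (and only nontrivial) observation is that these last two $d^{a}$ terms cancel. Because $J$ is orthogonal with respect to $g$ one has $g(X,JY)=-g(JX,Y)=-g(Y,JX)$; equivalently, this is the skew-symmetry of the K\"ahler form $\Omega(X,Y)=g(JX,Y)$. Hence
\[
-d^{a}(g(X,JY))-d^{a}(g(Y,JX))=0.
\]
Combining this cancellation with the explicit forms of $P^{\rho}$ and $Q^{\rho}$, the half-sum $\tfrac{1}{2}(P^{\rho}+Q^{\rho})(X,Y)$ reduces exactly to
\[
-\tfrac{1}{2}J\bigl([X,JY]+[Y,JX]+\sharp(\mathcal{L}_{X}^{a}(JY)^{\flat}+\mathcal{L}_{Y}^{a}(JX)^{\flat}+\mathcal{L}_{JX}^{a}Y^{\flat}+\mathcal{L}_{JY}^{a}X^{\flat})\bigr),
\]
which is the claimed bracket; Theorem~\ref{Theorem_P_and_Q} then delivers the symmetric-bracket property for free. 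There is no real obstacle: the proof is essentially a substitution together with the one-line cancellation coming from the skew-symmetry of $\Omega$.
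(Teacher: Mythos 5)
Your proposal is correct and follows essentially the same route as the paper: specialize Theorem \ref{Theorem_P_and_Q} to $\rho=\mathop{\rm Id}\nolimits_{TM}$ with the Levi-Civita symmetric bracket $\sharp(\mathcal{L}_{X}^{a}Y^{\flat}+\mathcal{L}_{Y}^{a}X^{\flat}-d^{a}(g(X,Y)))$ as input, and cancel the two $d^{a}$ terms via the skew-symmetry of the K\"ahler form. The paper's proof does exactly this, so no further comment is needed.
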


\begin{proof}
Let $d^{a}$ be the exterior derivative on manifold $M$. Taking $\rho =%
\mathop{\rm Id}\nolimits_{TM}$ in Theorem \ref{Theorem_P_and_Q} and using
Theorem \ref{Theorem_Sym_Bracket_by_Lie_der}, we deduce that the formula%
\begin{eqnarray*}
\left\langle X:Y\right\rangle  &=&-{\textstyle{\frac{1}{2}}}J\left( \left[
X,JY\right] +\left[ Y,JX\right] \right) -{\textstyle{\frac{1}{2}}}(J\circ
\sharp )(\mathcal{L}_{X}^{a}(JY)^{\flat }+\mathcal{L}_{JY}^{a}X^{\flat
}-d^{a}(g(X,JY)) \\
&&-{\textstyle{\frac{1}{2}}}(J\circ \sharp )(\mathcal{L}_{JX}^{a}Y^{\flat }+%
\mathcal{L}_{Y}^{a}(JX)^{\flat }-d^{a}(g(JX,Y))
\end{eqnarray*}%
defines a symmetric bracket in the tangent bundle with $\mathop{\rm Id}%
\nolimits_{TM}$ as an anchor and with the classical Lie bracket. Since $%
\Omega $ is a skew-symmetric $2$-form on $M$, it follows that%
\begin{equation*}
g(X,JY)+g(JX,Y)=\Omega (Y,X)+\Omega (Y,X)=0.
\end{equation*}%
Therefore,%
\begin{eqnarray*}
\left\langle X:Y\right\rangle  &=&-{\textstyle{\frac{1}{2}}}J\left( \left[
X,JY\right] +\left[ Y,JX\right] \right)  \\
&&-{\textstyle{\frac{1}{2}}}(J\circ \sharp )(\mathcal{L}_{X}^{a}(JY)^{\flat
}+\mathcal{L}_{Y}^{a}(JX)^{\flat }+\mathcal{L}_{JX}^{a}Y^{\flat }+\mathcal{L}%
_{JY}^{a}X^{\flat }).
\end{eqnarray*}
\end{proof}

It is obvious that the bracket in Corollary \ref%
{Lemma_formula_for_sym_bracketPQ} is a totally symmetric part of the
connection $\nabla ^{J}:\Gamma (TM)\times \Gamma (TM)\rightarrow \Gamma (TM)$
defined by 
\begin{equation*}
\nabla _{X}^{J}Y=-{\textstyle{\frac{1}{2}}}J\left( \left[ X,JY\right]
+\left\langle X:JY\right\rangle \right) .
\end{equation*}%
Let $\nabla $ be the Levi-Civita connection with respect to $g$ given by%
\begin{equation*}
\nabla _{X}Y={\textstyle{\frac{1}{2}}}\left( \left[ X,Y\right] +\left\langle
X:Y\right\rangle ^{\nabla }\right) .
\end{equation*}%
Now, let $\left\langle \cdot :\cdot \cdot \right\rangle =\left\langle \cdot
:\cdot \cdot \right\rangle ^{\nabla }$. Hence, 
\begin{equation*}
\nabla _{X}^{J}Y=-J\nabla _{X}(JY).
\end{equation*}%
One can observe that the affine sum 
\begin{equation*}
\overline{\nabla }={\textstyle{\frac{1}{2}}}\left( \nabla +\nabla
^{J}\right) 
\end{equation*}%
of connections $\nabla $ and $\nabla ^{J}$ is Lichnerowicz's first canonical
Hermitian connection \cite{Lichnerowicz1962}, which is compatible with both
the metric structure and the almost complex structure. This is a direct
consequence of the properties of $\nabla $ and $\nabla ^{J}$ given in the
following lemma.

\begin{lemma}
\label{Lemma_Prop_Nabla_Bar_01}~

\begin{itemize}
\item[(a)] $(\nabla ^{J}g)(X,Y,Z)=(\nabla g)(X,JY,JZ)$ for $X,Y,Z\in \Gamma
(TM)$ and

\item[(b)] $\nabla ^{J}J=-\nabla J$.
\end{itemize}
\end{lemma}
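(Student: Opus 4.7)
The plan is to prove both parts by direct unwinding of definitions, using only the defining formula $\nabla^{J}_{X}Y=-J\nabla_{X}(JY)$, the Leibniz rule, the identity $J^{2}=-\mathrm{Id}$, and the fact that $J$ is $g$-orthogonal, which is equivalent to the antisymmetry relation $g(JA,B)=-g(A,JB)$ (obtained from $g(JA,JB)=g(A,B)$ by replacing $B$ with $JB$).

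For part (a), I will expand $(\nabla^{J}g)(X,Y,Z)=X(g(Y,Z))-g(\nabla^{J}_{X}Y,Z)-g(Y,\nabla^{J}_{X}Z)$ and substitute the definition of $\nabla^{J}$ to obtain
\begin{equation*}
(\nabla^{J}g)(X,Y,Z)=X(g(Y,Z))+g(J\nabla_{X}(JY),Z)+g(Y,J\nabla_{X}(JZ)).
\end{equation*}
Then I apply $g(JA,B)=-g(A,JB)$ to both middle terms to rewrite this as $X(g(Y,Z))-g(\nabla_{X}(JY),JZ)-g(JY,\nabla_{X}(JZ))$. On the other side, since $g(JY,JZ)=g(Y,Z)$, expanding $(\nabla g)(X,JY,JZ)$ via the Leibniz rule yields exactly the same expression, which proves (a).

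For part (b), I will compute $(\nabla^{J}J)(X,Y)=\nabla^{J}_{X}(JY)-J\nabla^{J}_{X}Y$ directly. Substituting the definition of $\nabla^{J}$ twice and using $J^{2}=-\mathrm{Id}$ gives
\begin{equation*}
\nabla^{J}_{X}(JY)=-J\nabla_{X}(J^{2}Y)=J\nabla_{X}Y,\qquad J\nabla^{J}_{X}Y=-J^{2}\nabla_{X}(JY)=\nabla_{X}(JY),
\end{equation*}
so $(\nabla^{J}J)(X,Y)=J\nabla_{X}Y-\nabla_{X}(JY)=-(\nabla J)(X,Y)$, which is (b).

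There is no real obstacle here; the argument is a sign-bookkeeping exercise. The only point requiring care is to remember that orthogonality of $J$ yields the sign-flip $g(JA,B)=-g(A,JB)$ used twice in part (a), and to correctly apply $J^{2}=-\mathrm{Id}$ at the right places in part (b). Both computations work without invoking compatibility of $\nabla$ with $g$ or torsion-freeness, so they hold for $\nabla^{J}$ built from any connection $\nabla$ (the Levi-Civita assumption is only needed later to conclude that $\overline{\nabla}=\tfrac{1}{2}(\nabla+\nabla^{J})$ preserves $g$ and $J$).
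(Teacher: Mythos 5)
Your proof is correct: both identities follow exactly as you compute from $\nabla^{J}_{X}Y=-J\nabla_{X}(JY)$, $J^{2}=-\mathrm{Id}$, and the sign flip $g(JA,B)=-g(A,JB)$ coming from orthogonality of $J$. The paper states this lemma without proof, so your argument simply supplies the routine verification that is left implicit there; your added observation that neither metric compatibility nor torsion-freeness of $\nabla$ is needed for the lemma itself (only for the subsequent conclusion that $\overline{\nabla}$ preserves $g$ and $J$) is accurate.
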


Lemma \ref{Lemma_Prop_Nabla_Bar_01} now yields%
\begin{equation*}
\nabla ^{J}g=0,\ \ \overline{\nabla }={\textstyle {\frac{1 }{2}}}\left(
\nabla +\nabla ^{J}\right) g={\textstyle {\frac{1 }{2}}}\left( \nabla
g+\nabla ^{J}g\right) =0
\end{equation*}%
and%
\begin{equation*}
\overline{\nabla }J={\textstyle {\frac{1 }{2}}}\left( \nabla J+\nabla
^{J}J\right) ={\textstyle {\frac{1 }{2}}}\left( \nabla J-\nabla J\right) =0.
\end{equation*}%
We will now consider some further properties of $\nabla ^{J}$ and $\overline{%
\nabla }$.

For an $A$-connection $\nabla $ on $A$, we define the operators%
\begin{equation*}
d_{\nabla }^{a},d_{\nabla }^{s}:\Gamma (\mathop{\textstyle \bigotimes }%
\nolimits^{k}T^{\ast }M)\rightarrow \Gamma (\mathop{\textstyle \bigotimes }%
\nolimits^{k+1}T^{\ast }M)
\end{equation*}%
as the alternation and the symmetrization of $\nabla $, respectively, i.e.,
for $\zeta \in \Gamma (\mathop{\textstyle \bigotimes }\nolimits^{k}T^{\ast
}M)$, $X_{1},\ldots ,X_{k+1}\in \Gamma (TM)$, we have%
\begin{equation*}
(d_{\nabla }^{a}\zeta )(X_{1},\ldots ,X_{k+1})=\mathop{\textstyle \sum }%
\limits_{i=1}^{k+1}\left( -1\right) ^{i+1}\left( \nabla _{X_{i}}\zeta
\right) (X_{1},\ldots \widehat{X}_{i}\ldots ,X_{k+1})
\end{equation*}%
and%
\begin{equation*}
(d_{\nabla }^{s}\zeta )(X_{1},\ldots ,X_{k+1})=\mathop{\textstyle \sum }%
\limits_{i=1}^{k+1}\left( \nabla _{X_{i}}\zeta \right) (X_{1},\ldots 
\widehat{X}_{i}\ldots ,X_{k+1}).
\end{equation*}

We say that an almost Hermitian manifold $\left( M,g,J\right) $ is \emph{%
nearly K\"{a}hler} if $\left( \nabla _{X}J\right) Y=-\left( \nabla
_{Y}J\right) Y$ for $X,Y\in \Gamma (TM)$ (cf. \cite{Gray1970}). We have the
following lemma.

\begin{lemma}
\label{Lemma_on_nearly_Kahler}An almost manifold $\left( M,g,J\right) $ is
nearly K\"{a}hler if and only if $d_{\nabla }^{s}J=0$.
\end{lemma}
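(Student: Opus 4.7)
Since $J$ is a $(1,1)$-tensor, viewed naturally as a section of $T^{\ast}M\otimes TM$, I would first extend the operator $d_{\nabla}^{s}$ (which the paper defines on $\Gamma(\bigotimes^{k}T^{\ast}M)$) to vector-valued tensors in the standard Leibniz way, so that applied to $\nabla J \in \Gamma(T^{\ast}M\otimes T^{\ast}M\otimes TM)$ it symmetrizes in the two covariant slots. Unwinding the defining formula then yields
\[
(d_{\nabla}^{s}J)(X,Y) \;=\; (\nabla_{X}J)Y \,+\, (\nabla_{Y}J)X, \qquad X,Y\in\Gamma(TM).
\]

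Having this identity in hand, the equivalence is essentially tautological. If $(M,g,J)$ is nearly K\"ahler, that is $(\nabla_{X}J)Y=-(\nabla_{Y}J)X$, then the displayed right-hand side vanishes, so $d_{\nabla}^{s}J=0$. Conversely, if $d_{\nabla}^{s}J=0$, then setting $Y=X$ gives $2(\nabla_{X}J)X=0$ for every $X\in\Gamma(TM)$; polarizing the relation $(\nabla_{X}J)X=0$ by substituting $X+Y$ and expanding by $\mathbb{R}$-bilinearity of $\nabla J$ recovers the two-variable symmetric identity, which is exactly the nearly K\"ahler condition.

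I do not anticipate any serious obstacle. The entire content is the correct interpretation of $d_{\nabla}^{s}$ on a vector-valued tensor, after which no torsion or curvature terms enter and nothing is required beyond polarization of a quadratic identity. If a reader worries about the extension to vector-valued tensors, it suffices to note that the same formula $\sum_{i}(\nabla_{X_{i}}\zeta)(\cdots\widehat{X}_{i}\cdots)$ is tensorial in $\zeta$ regardless of the coefficient bundle, so the derivation of the displayed identity for $d_{\nabla}^{s}J$ is formally identical to the covariant case.
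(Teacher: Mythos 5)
Your proposal is correct and matches the paper's (implicit) reasoning: the paper states this lemma without proof, but the key identity $(d_{\nabla}^{s}J)(X,Y)=(\nabla_{X}J)Y+(\nabla_{Y}J)X$ is exactly what is computed at the start of the proof of Theorem \ref{Theorem_SymNabla_and_Nabla_J}, after which the equivalence with Gray's condition is immediate. Your detour through $Y=X$ and re-polarization in the converse direction is harmless but unnecessary, since $d_{\nabla}^{s}J=0$ already is the two-variable symmetric identity.
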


Moreover, if $\left( M,g,J\right) $ is nearly K\"{a}hler, $\overline{\nabla }
$ is a Hermitian connection with totally skew-symmetric torsion (e.g., cf. 
\cite{Agricola-Srni}).

Now, we compare the symmetric brackets induced by $\nabla $ and $\overline{%
\nabla }$. We will denote by $\left\langle \cdot :\cdot \right\rangle ^{%
\overline{\nabla }}$ the symmetric product of $\overline{\nabla }$.

\begin{theorem}
\label{Theorem_SymNabla_and_Nabla_J}For $X,Y\in \Gamma (TM)$, we have%
\begin{equation*}
J((d_{\nabla }^{s}J)(X,Y))=\left\langle X:Y\right\rangle ^{\nabla
}-\left\langle X:Y\right\rangle ^{\nabla ^{J}}.
\end{equation*}
\end{theorem}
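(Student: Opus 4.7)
The goal is a direct identity relating the symmetric products of $\nabla$ and $\nabla^{J}$, so the plan is to unpack the definition of $\nabla^{J}$ given just before the statement, use the fact that $J^{2}=-\mathop{\rm Id}\nolimits_{TM}$, and symmetrize. The key observation, already in the paper, is that $\nabla^{J}_{X}Y=-J\nabla_{X}(JY)$. Applying the Leibniz rule for the induced connection on $\mathop{\rm End}(TM)$ gives $\nabla_{X}(JY)=(\nabla_{X}J)Y+J(\nabla_{X}Y)$, so
\[
\nabla^{J}_{X}Y=-J(\nabla_{X}J)Y-J^{2}\nabla_{X}Y=\nabla_{X}Y-J(\nabla_{X}J)Y,
\]
which expresses $\nabla^{J}$ as a correction of $\nabla$ by the $\mathop{\rm End}(TM)$-valued $1$-form $-J\circ(\nabla J)$.

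Next I would symmetrize this identity in $X$ and $Y$. Adding the analogous expression with $X$ and $Y$ interchanged, and using the definitions of the symmetric products, one obtains
\[
\langle X:Y\rangle^{\nabla^{J}}=\langle X:Y\rangle^{\nabla}-J\bigl((\nabla_{X}J)Y+(\nabla_{Y}J)X\bigr).
\]
By the definition of the symmetrization $d_{\nabla}^{s}$ applied to the $(1,1)$-tensor $J$ (extending in the obvious way to vector-valued tensors, as the paper does when discussing $\nabla^{J}J$), the bracketed term is precisely $(d_{\nabla}^{s}J)(X,Y)$. Rearranging yields
\[
J\bigl((d_{\nabla}^{s}J)(X,Y)\bigr)=\langle X:Y\rangle^{\nabla}-\langle X:Y\rangle^{\nabla^{J}},
\]
which is the claimed identity.

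There is essentially no obstacle beyond bookkeeping: the argument is purely algebraic, with the only nontrivial ingredient being $J^{2}=-\mathop{\rm Id}\nolimits_{TM}$. The mildest point to be careful about is that $d_{\nabla}^{s}$ is originally defined on covariant tensors, so one must invoke the natural extension to $\mathop{\rm End}(TM)$-valued (or vector-valued) tensors through the induced connection; once this is in place, the three-line computation above completes the proof.
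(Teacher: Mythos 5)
Your proposal is correct and follows essentially the same route as the paper: both arguments reduce to the identity $\nabla^{J}_{X}Y=-J\nabla_{X}(JY)$, the Leibniz rule $(\nabla_{X}J)Y=\nabla_{X}(JY)-J\nabla_{X}Y$, the relation $J^{2}=-\mathop{\rm Id}\nolimits_{TM}$, and symmetrization in $X$ and $Y$. The only difference is cosmetic (you expand $\nabla^{J}$ first, the paper expands $(d_{\nabla}^{s}J)(X,Y)$ first), and your remark about extending $d_{\nabla}^{s}$ to vector-valued tensors is exactly the convention the paper uses implicitly.
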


\begin{proof}
We first observe that%
\begin{eqnarray*}
(d_{\nabla }^{s}J)(X,Y) &=&(\nabla _{X}J)Y+(\nabla _{Y}J)X \\
&=&\nabla _{X}(JY)+\nabla _{Y}(JX)-J(\nabla _{X}Y+\nabla _{Y}X) \\
&=&\nabla _{X}(JY)+\nabla _{Y}(JX)-J\left\langle X:Y\right\rangle ^{\nabla }.
\end{eqnarray*}%
From this equality, we obtain%
\begin{eqnarray*}
J((d_{\nabla }^{s}J)(X,Y)) &=&J\nabla _{X}(JY)+J\nabla _{Y}(JX)+\left\langle
X:Y\right\rangle ^{\nabla } \\
&=&-\left\langle X:Y\right\rangle ^{\nabla ^{J}}+\left\langle
X:Y\right\rangle ^{\nabla }.
\end{eqnarray*}
\end{proof}

\begin{theorem}
\label{Theorem_Bracket_Nabla_Bar}For $X,Y\in \Gamma (TM)$, we have%
\begin{equation*}
\left\langle X:Y\right\rangle ^{\overline{\nabla }}=\left\langle
X:Y\right\rangle ^{\nabla }-{\textstyle {\frac{1 }{2}}}J((d_{\nabla
}^{s}J)(X,Y)).
\end{equation*}
\end{theorem}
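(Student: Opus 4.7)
The plan is to exploit the fact that the symmetric product is $\mathbb{R}$-linear in the connection, together with the identity relating $\langle \cdot : \cdot \rangle^{\nabla}$ and $\langle \cdot : \cdot \rangle^{\nabla^J}$ established in Theorem \ref{Theorem_SymNabla_and_Nabla_J}.

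First I would expand the definition of $\langle X:Y\rangle^{\overline{\nabla}}$ using $\overline{\nabla} = \tfrac{1}{2}(\nabla + \nabla^J)$. Since the symmetric product assigns to each connection the symmetric part of its covariant derivative, and symmetrization commutes with $\mathbb{R}$-linear combinations of connections, we immediately obtain
\begin{equation*}
\langle X:Y\rangle^{\overline{\nabla}} = \overline{\nabla}_X Y + \overline{\nabla}_Y X = \tfrac{1}{2}\bigl(\langle X:Y\rangle^{\nabla} + \langle X:Y\rangle^{\nabla^J}\bigr).
\end{equation*}

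Second, I would invoke Theorem \ref{Theorem_SymNabla_and_Nabla_J}, which states that $J((d_{\nabla}^s J)(X,Y)) = \langle X:Y\rangle^{\nabla} - \langle X:Y\rangle^{\nabla^J}$, and solve for $\langle X:Y\rangle^{\nabla^J}$:
\begin{equation*}
\langle X:Y\rangle^{\nabla^J} = \langle X:Y\rangle^{\nabla} - J((d_{\nabla}^s J)(X,Y)).
\end{equation*}
Substituting this into the previous display and simplifying yields the claimed formula
\begin{equation*}
\langle X:Y\rangle^{\overline{\nabla}} = \langle X:Y\rangle^{\nabla} - \tfrac{1}{2} J((d_{\nabla}^s J)(X,Y)).
\end{equation*}

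There is essentially no obstacle here: the argument is a one-line algebraic manipulation once Theorem \ref{Theorem_SymNabla_and_Nabla_J} is available. The only conceptual point worth stressing is the affine-sum remark made by the author earlier (the affine sum of symmetric brackets is again a symmetric bracket); this is what legitimizes splitting $\langle X:Y\rangle^{\overline{\nabla}}$ as the average of the symmetric brackets of $\nabla$ and $\nabla^J$, and it is the reason the proof reduces to direct substitution.
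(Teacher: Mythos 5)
Your argument is correct and is exactly the paper's proof: both write $\langle X:Y\rangle^{\overline{\nabla}}$ as the average $\tfrac12\langle X:Y\rangle^{\nabla}+\tfrac12\langle X:Y\rangle^{\nabla^J}$ using the affine-sum decomposition $\overline{\nabla}=\tfrac12(\nabla+\nabla^J)$, and then substitute Theorem \ref{Theorem_SymNabla_and_Nabla_J}. No gaps.
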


\begin{proof}
Since $\overline{\nabla }={\textstyle{\frac{1}{2}}}\left( \nabla +\nabla
^{J}\right) $ is an affine sum of connections $\nabla $ and $\nabla ^{J}$,%
\begin{equation*}
\left\langle X:Y\right\rangle ^{\overline{\nabla }}={\textstyle{\frac{1}{2}}}%
\left\langle X:Y\right\rangle ^{\nabla }+{\textstyle{\frac{1}{2}}}%
\left\langle X:Y\right\rangle ^{\nabla ^{J}}.
\end{equation*}%
From this result and Theorem \ref{Theorem_SymNabla_and_Nabla_J}, we see that%
\begin{eqnarray*}
\left\langle X:Y\right\rangle ^{\overline{\nabla }} &=&{\textstyle{\frac{1}{2%
}}}\left\langle X:Y\right\rangle ^{\nabla }+{\textstyle{\frac{1}{2}}}\left(
\left\langle X:Y\right\rangle ^{\nabla }-J((d_{\nabla }^{s}J)(X,Y))\right) 
\\
&=&\left\langle X:Y\right\rangle ^{\nabla }-{\textstyle{\frac{1}{2}}}%
J((d_{\nabla }^{s}J)(X,Y)).
\end{eqnarray*}
\end{proof}

Since $\overline{\nabla }={\textstyle {\frac{1 }{2}}}\left( \nabla +\nabla
^{J}\right) $ and $\nabla $ is torsion-free, we have 
\begin{equation*}
T^{\overline{\nabla }}={\textstyle {\frac{1 }{2}}}T^{\nabla }+{\textstyle {%
\frac{1 }{2}}}T^{\nabla ^{J}}={\textstyle {\frac{1 }{2}}}T^{\nabla ^{J}}.
\end{equation*}

\begin{theorem}
\label{Torsion_of_Nabla_J}$T^{\nabla ^{J}}=-J\circ \left( d_{\nabla
}^{a}J\right) $.
\end{theorem}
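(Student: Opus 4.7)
The plan is to prove the identity $T^{\nabla^J}(X,Y) = -J((d_\nabla^a J)(X,Y))$ by a direct algebraic computation, exploiting the definition $\nabla^J_X Y = -J\nabla_X(JY)$ together with $J^2 = -\mathrm{Id}_{TM}$. First I would expand the right-hand side by applying the formula for $d_\nabla^a$ to $J$, viewed as a $TM$-valued $1$-form (the formula $(d_\nabla^a \zeta)(X_1,\ldots,X_{k+1}) = \sum_i (-1)^{i+1}(\nabla_{X_i}\zeta)(\ldots)$ transports without change to vector-valued forms), which yields
\[
(d_\nabla^a J)(X,Y) = (\nabla_X J)(Y) - (\nabla_Y J)(X) = \nabla_X(JY) - J\nabla_X Y - \nabla_Y(JX) + J\nabla_Y X,
\]
after invoking the Leibniz rule $(\nabla_X J)Y = \nabla_X(JY) - J(\nabla_X Y)$.

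Next I would apply $-J$ to both sides and use $J^2 = -\mathrm{Id}$ to simplify the pure-$\nabla$ terms, obtaining
\[
-J((d_\nabla^a J)(X,Y)) = -J\nabla_X(JY) + J\nabla_Y(JX) - \nabla_X Y + \nabla_Y X.
\]
Recognising $-J\nabla_X(JY) = \nabla^J_X Y$ and $J\nabla_Y(JX) = -\nabla^J_Y X$ from the very definition of $\nabla^J$, the right-hand side regroups as
\[
\bigl(\nabla^J_X Y - \nabla^J_Y X\bigr) - \bigl(\nabla_X Y - \nabla_Y X\bigr).
\]

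Finally, since $\nabla$ is the torsion-free Levi-Civita connection, $\nabla_X Y - \nabla_Y X = [X,Y]$, so the expression collapses to $\nabla^J_X Y - \nabla^J_Y X - [X,Y]$, which is $T^{\nabla^J}(X,Y)$ by definition. The whole argument is essentially one line of bookkeeping once the identity $\nabla^J_X Y = -J\nabla_X(JY)$ is invoked; the only points requiring a bit of care are the sign changes coming from $J^2 = -\mathrm{Id}$ and the correct application of the Leibniz rule to $\nabla_X(JY)$, neither of which presents any real obstacle.
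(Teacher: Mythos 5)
Your proposal is correct and follows essentially the same computation as the paper: expand $(d_\nabla^a J)(X,Y)$ via the Leibniz rule, apply $-J$, use $J^2=-\mathrm{Id}$ and $\nabla^J_X Y=-J\nabla_X(JY)$, and invoke torsion-freeness of $\nabla$; the only (immaterial) difference is that the paper folds $\nabla_X Y-\nabla_Y X=[X,Y]$ in before applying $-J$ rather than at the end.
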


\begin{proof}
Let $X,Y\in \Gamma (TM)$. Then%
\begin{eqnarray*}
(d_{\nabla }^{a}J)(X,Y) &=&(\nabla _{X}J)Y-(\nabla _{Y}J)X \\
&=&\nabla _{X}(JY)-\nabla _{Y}(JX)-J\left[ X,Y\right] .
\end{eqnarray*}%
Hence,%
\begin{eqnarray*}
-J((d_{\nabla }^{a}J)(X,Y)) &=&-J\nabla _{X}(JY)-(-J\nabla _{Y}(JX))+J^{2} 
\left[ X,Y\right] \\
&=&\nabla _{X}^{J}Y-\nabla _{Y}^{J}X-\left[ X,Y\right] =T^{\nabla ^{J}}(X,Y).
\end{eqnarray*}
\end{proof}

\begin{theorem}
\label{Torsion_of_Nabla_J_second}For $X,Y\in \Gamma (TM)$, we have%
\begin{equation*}
2T^{\nabla ^{J}}(X,Y)=-N_{J}(X,Y)+(d_{\nabla }^{s}J)(X,JY)-(d_{\nabla
}^{s}J)(JX,Y).
\end{equation*}%
In particular, if $\left( M,g,J\right) $ is nearly K\"{a}hler, then%
\begin{equation*}
T^{\nabla ^{J}}=-{\textstyle {\frac{1 }{2}}}N_{J}.
\end{equation*}
\end{theorem}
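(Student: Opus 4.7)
The plan is to start from Theorem~\ref{Torsion_of_Nabla_J}, which gives $T^{\nabla^{J}}(X,Y)=-J(d_{\nabla}^{a}J)(X,Y)$, and to manipulate the symmetrized expression $(d_{\nabla}^{s}J)(X,JY)-(d_{\nabla}^{s}J)(JX,Y)$ until it takes the form $2T^{\nabla^{J}}(X,Y)+N_{J}(X,Y)$. The key algebraic ingredient is the identity $(\nabla_{U}J)\circ J=-J\circ (\nabla_{U}J)$ for every $U\in\Gamma(TM)$, which follows from differentiating $J^{2}=-\operatorname{Id}$ and applying the Leibniz rule.

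Expanding by the definition of $d_{\nabla}^{s}$, I would first write
\[
(d_{\nabla}^{s}J)(X,JY)-(d_{\nabla}^{s}J)(JX,Y)=\bigl[(\nabla_{X}J)(JY)-(\nabla_{Y}J)(JX)\bigr]+\bigl[(\nabla_{JY}J)X-(\nabla_{JX}J)Y\bigr].
\]
Using $(\nabla_{U}J)(JV)=-J(\nabla_{U}J)V$, the first bracket becomes $-J\bigl[(\nabla_{X}J)Y-(\nabla_{Y}J)X\bigr]=-J(d_{\nabla}^{a}J)(X,Y)=T^{\nabla^{J}}(X,Y)$. For the second bracket, the same identity (applied twice) shows that $(d_{\nabla}^{a}J)(JX,JY)=-J\bigl[(\nabla_{JX}J)Y-(\nabla_{JY}J)X\bigr]$, so that $(\nabla_{JY}J)X-(\nabla_{JX}J)Y=-J(d_{\nabla}^{a}J)(JX,JY)=T^{\nabla^{J}}(JX,JY)$. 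Combining the two, I obtain the clean intermediate identity
\[
(d_{\nabla}^{s}J)(X,JY)-(d_{\nabla}^{s}J)(JX,Y)=T^{\nabla^{J}}(X,Y)+T^{\nabla^{J}}(JX,JY).
\]

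The next and main step is to show $T^{\nabla^{J}}(JX,JY)=T^{\nabla^{J}}(X,Y)+N_{J}(X,Y)$. For this I would unfold both torsions using the definition $\nabla^{J}_{U}V=-J\nabla_{U}(JV)$ and replace covariant derivatives by brackets via the torsion-freeness of $\nabla$, writing $\nabla_{JX}Y=[JX,Y]+\nabla_{Y}(JX)$ and analogously for $\nabla_{JY}X$. After the $-J$'s cancel pairwise, what survives on the right-hand side is exactly $J[JX,Y]+J[X,JY]+[X,Y]-[JX,JY]$, which is the Nijenhuis tensor. Plugging this back in yields $(d_{\nabla}^{s}J)(X,JY)-(d_{\nabla}^{s}J)(JX,Y)=2T^{\nabla^{J}}(X,Y)+N_{J}(X,Y)$, which is the main formula after rearrangement.

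The nearly Kähler case is then a direct corollary: Lemma~\ref{Lemma_on_nearly_Kahler} states that $d_{\nabla}^{s}J=0$ precisely when $(M,g,J)$ is nearly Kähler, so the $d_{\nabla}^{s}J$ terms vanish and we get $2T^{\nabla^{J}}=-N_{J}$. I expect the main obstacle to be the torsion-swap identity $T^{\nabla^{J}}(JX,JY)=T^{\nabla^{J}}(X,Y)+N_{J}(X,Y)$: unwinding the correct cancellations between $-J\nabla_{JX}(JJY)=J\nabla_{JX}Y$ and the analogous term, and then identifying the four remaining bracket terms as the Nijenhuis tensor, is the only nontrivial bookkeeping in the argument.
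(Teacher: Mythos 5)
Your proof is correct, and it reaches the identity by a somewhat different route than the paper. The paper starts from the known formula $-N_{J}(X,Y)=(\nabla_{X}J)JY-(\nabla_{Y}J)JX+(\nabla_{JX}J)Y-(\nabla_{JY}J)X$ (cited from Agricola), inserts the $d_{\nabla}^{s}J$ terms by symmetrizing, and then recognizes $(\nabla_{X}J)JY-(\nabla_{Y}J)JX$ as $T^{\nabla^{J}}(X,Y)$ by direct expansion using torsion-freeness of $\nabla$. You instead expand the left-hand combination $(d_{\nabla}^{s}J)(X,JY)-(d_{\nabla}^{s}J)(JX,Y)$, use the anticommutation $(\nabla_{U}J)\circ J=-J\circ(\nabla_{U}J)$ together with Theorem~\ref{Torsion_of_Nabla_J} to reduce it to $T^{\nabla^{J}}(X,Y)+T^{\nabla^{J}}(JX,JY)$, and then prove the clean swap identity $T^{\nabla^{J}}(JX,JY)=T^{\nabla^{J}}(X,Y)+N_{J}(X,Y)$ directly from the definitions $\nabla^{J}_{U}V=-J\nabla_{U}(JV)$, the bracket form of $N_{J}$, and torsion-freeness of $\nabla$; I checked this identity and it holds with the ordinary Lie bracket, which is indeed the bracket used for $T^{\nabla^{J}}$ in the paper. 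The computations underlying the two arguments are essentially the same, but your organization has two advantages: it does not invoke the external expression of $N_{J}$ in terms of covariant derivatives, and it isolates the intermediate identities $(d_{\nabla}^{s}J)(X,JY)-(d_{\nabla}^{s}J)(JX,Y)=T^{\nabla^{J}}(X,Y)+T^{\nabla^{J}}(JX,JY)$ and $T^{\nabla^{J}}(JX,JY)-T^{\nabla^{J}}(X,Y)=N_{J}(X,Y)$, which are of independent interest; the price is the extra bookkeeping in the swap identity, which you correctly flag as the only nontrivial step.
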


\begin{proof}
Let $X,Y\in \Gamma (TM)$. Then (e.g., \cite{Agricola-Srni} shows the first
equality):%
\begin{eqnarray*}
-N_{J}(X,Y) &=&(\nabla _{X}J)JY-(\nabla _{Y}J)JX+(\nabla _{JX}J)Y-(\nabla
_{JY}J)X \\
&=&(\nabla _{X}J)JY-(\nabla _{Y}J)JX-(\nabla _{Y}J)JX+(d_{\nabla
}^{s}J)(JX,Y) \\
&&+(\nabla _{X}J)JY-(d_{\nabla }^{s}J)(X,JY) \\
&=&2\left( (\nabla _{X}J)JY-(\nabla _{Y}J)JX\right) +(d_{\nabla
}^{s}J)(JX,Y)-(d_{\nabla }^{s}J)(X,JY).
\end{eqnarray*}%
Moreover,%
\begin{eqnarray*}
(\nabla _{X}J)JY-(\nabla _{Y}J)JX &=&-\nabla _{X}Y-J(\nabla _{X}(JY))+\nabla
_{Y}X+J(\nabla _{Y}(JX)) \\
&=&-J(\nabla _{X}(JY))-(-J(\nabla _{Y}(JX)))-\nabla _{X}Y+\nabla _{Y}X \\
&=&\nabla _{X}^{J}Y-\nabla _{Y}^{J}X-\left[ X,Y\right] =T^{\nabla ^{J}}(X,Y).
\end{eqnarray*}%
It follows that%
\begin{equation*}
-N_{J}(X,Y)=2T^{\nabla ^{J}}(X,Y)+(d_{\nabla }^{s}J)(JX,Y)-(d_{\nabla
}^{s}J)(X,JY).
\end{equation*}
\end{proof}

Since $\overline{\nabla }$ is a totally skew-symmetric connection, Theorem %
\ref{Theorem_Bracket_Nabla_Bar} now leads to 
\begin{eqnarray}
\overline{\nabla }_{X}Y &=&{\textstyle {\frac{1 }{2}}}\left( \left[ X,Y%
\right] +\left\langle X:Y\right\rangle ^{\overline{\nabla }}\right) +{%
\textstyle {\frac{1 }{2}}}T^{\overline{\nabla }}(X,Y)
\label{formula_nearly_final_for_Nabla_Bar} \\
&=&{\textstyle {\frac{1 }{2}}}\left( \left[ X,Y\right] +\left\langle
X:Y\right\rangle ^{\nabla }-J((d_{\nabla }^{s}J)(X,Y))\right) +{\textstyle {%
\frac{1 }{2}}}T^{\overline{\nabla }}(X,Y)  \notag \\
&=&\nabla _{X}Y-{\textstyle {\frac{1 }{2}}}J((d_{\nabla }^{s}J)(X,Y))+{%
\textstyle {\frac{1 }{4}}}T^{\nabla ^{J}}(X,Y).  \notag
\end{eqnarray}

Combining (\ref{formula_nearly_final_for_Nabla_Bar}) with Lemma \ref%
{Lemma_on_nearly_Kahler} and Theorems \ref{Torsion_of_Nabla_J} and \ref%
{Torsion_of_Nabla_J_second}, we get the following result:

\begin{corollary}
If $\left( M,g,J\right) $ is nearly K\"{a}hler, then $d_{\nabla }^{s}J=0$,
and, in consequence, 
\begin{equation*}
\overline{\nabla }=\nabla -{\textstyle {\frac{1 }{4}}}J\circ (d_{\nabla
}^{a}J)=\nabla -{\textstyle {\frac{1 }{8}}}N_{J}.
\end{equation*}
\end{corollary}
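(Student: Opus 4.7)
The plan is to combine the three immediately preceding ingredients: Lemma \ref{Lemma_on_nearly_Kahler} (nearly K\"ahler $\Leftrightarrow d_{\nabla}^{s}J=0$), Theorem \ref{Torsion_of_Nabla_J} ($T^{\nabla^{J}}=-J\circ d_{\nabla}^{a}J$), Theorem \ref{Torsion_of_Nabla_J_second} ($T^{\nabla^{J}}=-\tfrac{1}{2}N_{J}$ in the nearly K\"ahler case), and the formula \eqref{formula_nearly_final_for_Nabla_Bar} for $\overline{\nabla}$ obtained earlier, which expresses $\overline{\nabla}_{X}Y$ in terms of $\nabla_{X}Y$, the tensor $J\circ d_{\nabla}^{s}J$, and $T^{\nabla^{J}}$.

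First I would invoke Lemma \ref{Lemma_on_nearly_Kahler} to assert $d_{\nabla}^{s}J=0$, which is the first equality claimed in the corollary. Inserting this into \eqref{formula_nearly_final_for_Nabla_Bar} kills the middle summand and leaves
\[
\overline{\nabla}_{X}Y=\nabla_{X}Y+\tfrac{1}{4}T^{\nabla^{J}}(X,Y).
\]
At this point the proof splits into two short substitutions: using Theorem \ref{Torsion_of_Nabla_J} I replace $T^{\nabla^{J}}$ by $-J\circ d_{\nabla}^{a}J$ to obtain the first displayed expression $\overline{\nabla}=\nabla-\tfrac{1}{4}J\circ (d_{\nabla}^{a}J)$, and using Theorem \ref{Torsion_of_Nabla_J_second} (which in the nearly K\"ahler case simplifies to $T^{\nabla^{J}}=-\tfrac{1}{2}N_{J}$) I get the second form $\overline{\nabla}=\nabla-\tfrac{1}{8}N_{J}$.

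There is essentially no obstacle: the corollary is a direct assembly of previously proved identities, and the only thing to check is that the coefficient bookkeeping is correct ($\tfrac{1}{4}\cdot(-1)=-\tfrac{1}{4}$ and $\tfrac{1}{4}\cdot(-\tfrac{1}{2})=-\tfrac{1}{8}$). If any subtlety appears, it will be in verifying that \eqref{formula_nearly_final_for_Nabla_Bar} was indeed derived in full generality (not assuming nearly K\"ahler) so that specialization to $d_{\nabla}^{s}J=0$ is legitimate; a glance at its derivation via Theorem \ref{Theorem_Bracket_Nabla_Bar} and $T^{\overline{\nabla}}=\tfrac{1}{2}T^{\nabla^{J}}$ confirms this, so the argument closes in a few lines.
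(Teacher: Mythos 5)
Your proposal is correct and is exactly the paper's argument: the corollary is stated as a direct combination of formula (\ref{formula_nearly_final_for_Nabla_Bar}) with Lemma \ref{Lemma_on_nearly_Kahler} and Theorems \ref{Torsion_of_Nabla_J} and \ref{Torsion_of_Nabla_J_second}, with the same coefficient bookkeeping you carried out. Nothing is missing.
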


\


\begin{thebibliography}{99}
\bibitem{Agricola-Srni} Agricola,~I.: The Srn\'{\i} lectures on
non-integrable geometries with torsion. Arch. Math. (Brno) \textbf{42}
(suppl.), 5--84 (2006).

\bibitem{Balcerzak-Pierzchalski2013} Balcerzak B., Pierzchalski A.:
Generalized gradients on Lie algebroids. Ann. Global. Anal. Geom. \textbf{44}
(3), 319--337 (2013). \href{https://doi.org/10.1007/s10455-013-9368-y}{doi:
10.1007/s10455-013-9368-y}

\bibitem{Balcerzak2019} Balcerzak B.: On the Dirac Type Operators on
Symmetric Tensors. In: Kielanowski,~P., Odzijewicz,~A., Previato,~E. (Eds.)
Geometric Methods in Physics XXXVI. Trends in Mathematics, pp. 215--222.
Birkh\"{a}user (2019). \href{https://doi.org/10.1007/978-3-030-01156-7_23}{%
doi: 10.1007/978-3-030-01156-7\_23}

\bibitem{Cortes-Martinez2004} Cort\'{e}s,~J. , Mart\'{\i}nez,~E: Mechanical
control systems on Lie algebroids. IMA J. Math. Control Info. \textbf{21}
(4), 457--492 (2004). \href{https://doi.org/10.1093/imamci/21.4.457}{%
doi.org/10.1093/imamci/21.4.457}

\bibitem{Crouch1981} Crouch,~P.: Geometric structures in systems theory.
IEEE Proc. D Control Theory Appl. \textbf{128} (5), 242--252 (1981)

\bibitem{Grabowska-Grabowski2008} Grabowska,~K., Grabowski,~J.: Variational
calculus with constraints on general algebroids, J. Phys. A: Math. Theor. 
\textbf{41}, 175204, (2008). \href{https://doi.org/10.1088/1751-8113/41/17/175204}%
{https://doi.org/10.1088/1751-8113/41/17/175204}

\bibitem{Grabowska-Grabowski-Urbanski2006} Grabowska,~K., Grabowski,~J., Urba%
\'{n}ski,~P.: Geometrical Mechanics on algebroids. Int. J. Geom. Meth. Mod.
Phys. \textbf{3} (3), 559--575 (2006). \href{https://doi.org/10.1142/S0219887806001259}%
{https://doi.org/10.1142/S0219887806001259}

\bibitem{Grabowski-Urbanski1997} Grabowski,~J., Urba\'{n}ski,~P.: Lie
algebroids and Poisson-Nijenhuis structures. Rep.Math. Phys. \textbf{40},
195--208 (1997). \href{https://doi.org/10.1016/S0034-4877(97)85916-2}{%
https://doi.org/10.1016/S0034-4877(97)85916-2}

\bibitem{Grabowski-Urbanski1999} Grabowski,~J., Urba\'{n}ski,~P.: Algebroids
-- general differential calculi on vector bundles. J. Geom. Phys. \textbf{31}%
, 111--141 (1999). \href{https://doi.org/10.1016/S0393-0440(99)00007-8}{%
https://doi.org/10.1016/S0393-0440(99)00007-8}

\bibitem{Gray1970} Gray,~A.: Nearly K\"{a}hler manifolds. J. Diff. Geometry 
\textbf{4}, 283--309 (1970). \href{https://doi.org/10.4310/jdg/1214429504}{%
doi:10.4310/jdg/1214429504}

\bibitem{H-B-P2006} Heydari,~A., Boroojerdian ,~N., Peyghan,~E.: A
description of derivations of the algebra of symmetric tensors. Arch. Math.
(Brno) \textbf{42} (2), 175--184 (2006). \href{http://emis.icm.edu.pl/journals/AM/06-2/am1369.pdf}%
{http://emis.icm.edu.pl/journals/AM/06-2/am1369.pdf}

\bibitem{L-M-N2010} de Le\'{o}n,~M., Marrero,~J.~C., Mart\'{\i}n de
Diego,~D.: Linear almost Poisson structures and Hamilton-Jacobi equation.
Applications to nonholonomic mechanics. J. Geom. Mech. \textbf{2} (2),
159--198 (2010). \href{http://dx.doi.org/10.3934/jgm.2010.2.159}{doi:
10.3934/jgm.2010.2.159}

\bibitem{Kosmann-Schwarzbach-Magri1990} Kosmann-Schwarzbach, Y., Magri, F.:
Poisson-Nijenhuis structures. Annales de l'I. H. P., section \textbf{A},
tome \textbf{53}, n$^{\text{o}}$ 1, 35--81 (1990). \href{http://www.numdam.org/item?id=AIHPA_1990__53_1_35_0}%
{http://www.numdam.org/item?id=AIHPA\_1990\_\_53\_1\_35\_0}

\bibitem{Lewis1998} Lewis,~A.~D.: Affine connections and distributions with
applications to nonholonomic mechanics. Reports Math. Phys. \textbf{42}
(1--2), 135--164 (1998). \href{https://doi.org/10.1016/S0034-4877(98)80008-6}%
{doi: 10.1016/S0034-4877(98)80008-6}

\bibitem{Lichnerowicz1962} Lichnerowicz,~A.: Th\'{e}orie globale des
connexions et des groupes d'holonomie. Edizioni Cremonese, Roma (1962)

\bibitem{Mackenzie1987} Mackenzie,~K.~C.~H.: Lie Groupoids and Lie
Algebroids in Differential Geometry. London Math. Soc. Lecture Note Ser. 
\textbf{124}, Cambridge Univ. Press (1987)

\bibitem{Mackenzie2005} Mackenzie,~K.~C.~H.: General Theory of Lie Groupoids
and Lie Algebroids.\ London Math. Soc. Lecture Note Ser. \textbf{213},
Cambridge Univ. Press (2005)

\bibitem{Maxim-Raileanu1976} Maxim-R\u{a}ileanu, L.: Cohomology of Lie
algebroids, An. \c{S}ti. Univ. \textquotedblleft Al. I.
Cuza\textquotedblright\ Ia\c{s}i Sec\c{t}. I a Mat. (N.S.) \textbf{22},
197--199 (1976)

\bibitem{Mikes-Rovenski-Stepanov2020} Mike\v{s}, J., Rovenski, V., Stepanov,
S. E.: An example of Lichnerowicz-type Laplacian. Ann. Global Anal. Geom. 
\textbf{58} (1), 19--34 (2020). \href{https://doi.org/10.1007/s10455-020-09714-9}%
{https://doi.org/10.1007/s10455-020-09714-9}

\bibitem{Popescu1996} Popescu,~P.: Categories of Modules with Differentials.
J. Algebra \textbf{185} (1), 50--73 (1996). \href{https://doi.org/10.1006/jabr.1996.0312}%
{doi: 10.1006/jabr.1996.0312}

\bibitem{Popescu-Popescu2001BCP} Popescu,~M., Popescu,~P.: Anchored vector
bundles and Lie algebroids.\textit{\ }In: Kubarski,~J., Urba\'{n}ski,~P.,
Wolak,~R. (Eds.) Lie algebroids and related topics. Banach Centre Publ. 
\textbf{54}, 51--69 (2001). \href{http://dx.doi.org/10.4064/bc54-0-5}{doi:
10.4064/bc54-0-5}

\bibitem{Popescu-Popescu2001BCP-2} Popescu~M., Popescu~P.: Geometric objects
defined by almost Lie structures. In: Kubarski,~J., Urba\'{n}ski,~P.,
Wolak,~R. (Eds.) Lie algebroids and related topics. Banach Centre Publ. 
\textbf{54}, 217--233 (2001). \href{http://dx.doi.org/10.4064/bc54-0-12}{%
doi: 10.4064/bc54-0-12}

\bibitem{Popescu2004} Popescu~P.: On higher order geometry on anchored
vector bundles. Cent. Eur. J. Math. \textbf{2} (5), 826--839 (2004). \href{https://doi.org/10.2478/BF02475980}%
{https://doi.org/10.2478/BF02475980}

\bibitem{Popescu2019} Popescu, ~M., Popescu,~P.: Almost Lie Algebroids and
Characteristic Classes. SIGMA Symmetry Integrability Geom. Methods Appl. 
\textbf{15}, 021, 12 pages (2019). \href{https://doi.org/10.3842/SIGMA.2019.021}%
{doi.org/10.3842/SIGMA.2019.021}

\bibitem{Pradines1967} Pradines,~J.: Th\'{e}orie de Lie pour les group\"{\i}%
des diff\'{e}rentiables, calcul diff\'{e}rentiel dans la cat\'{e}gorie des
group\"{\i}des infinit\'{e}simaux. C. R. Acad. Sci. Paris \textbf{264},
245--248 (1967)

\bibitem{Sampson1973} Sampson, J. H.: On a theorem of Chern. Trans. Amer.
Math. Soc. \textbf{177}, 141--153 (1973). \href{https://doi.org/10.1090/S0002-9947-1973-0317221-7}%
{doi: 10.1090/S0002-9947-1973-0317221-7}

\bibitem{Stepanov-Tsyganok-Mikes2019} Stepanov, S., Tsyganok, I., Mike\v{s},
J.: On the Sampson Laplacian. Filomat \textbf{33} (4), 1059--1070 (2019)
\end{thebibliography}
\end{document}